\documentclass[a4paper,10pt]{amsart}
\usepackage{amsmath,amsfonts,amssymb,amsthm, amsfonts, amscd, url}
\usepackage[latin1]{inputenc}
\usepackage[english]{babel}
\usepackage{url}
\usepackage{setspace}

\usepackage[cmtip,arrow]{xy}
\usepackage{pb-diagram,pb-xy}

\swapnumbers 
\theoremstyle{plain}
\newtheorem{thm}{Theorem}[section]
\newtheorem{prop}[thm]{Proposition}
\newtheorem{lemma}[thm]{Lemma}

\theoremstyle{remark}
\theoremstyle{definition}
\newtheorem{rem}[thm]{Remark}
\newtheorem{rems}[thm]{Remarks}

\newtheorem{remsdefs}[thm]{Remarks-Definitions}

\newtheorem{defi}[thm]{Definition}
\newtheorem{defis}[thm]{Definitions}

\newtheorem{notas}[thm]{Notations}

\usepackage[english]{babel}

\title[Real logarithms of semi-simple matrices]{Real logarithms of semi-simple matrices}

\author{Donato Pertici}

\begin{document}

\parindent 0pt
\selectlanguage{english}

\maketitle

\vspace*{-0.2in}

\begin{center}
{\scriptsize Dipartimento di Matematica  e Informatica, Viale Morgagni 67/a, 50134 Firenze, ITALIA

\vspace*{0.02in}

donato.pertici@unifi.it,  \   http://orcid.org/0000-0003-4667-9568}

\end{center}


\vspace*{-0.1in}

\begin{abstract}
We study the differential structure of the set of real logarithms of a non-singular real matrix, under the assumption  that the matrix is either semi-simple or orthogonal.
\end{abstract}


{\small \tableofcontents}

\renewcommand{\thefootnote}{\fnsymbol{footnote}}
\footnotetext{
This research was partially supported by GNSAGA-INdAM (Italy).
}
\renewcommand{\thefootnote}{\arabic{footnote}}
\setcounter{footnote}{0}

\vspace*{-0.3in}

{\small Keywords: logarithm matrix, semi-simple matrix, orthogonal matrix, skew-symmetric matrix, unitary matrix, homogeneous space, homotopy sequence of a bundle, generalized principal real logarithm.}

\smallskip

{\small Mathematics~Subject~Classification~(2020): 15A24, 53C30, 15B10.}

\bigskip

\section*{Introduction}\label{intro}

The aim of this work is to study the differential geometric properties of the sets of real logarithms of real semi-simple matrices and of skew-symmetric real logarithms of special orthogonal matrices. As far as we know, such studies have never been done before. More generally, there are not many works that study the real logarithms of a matrix from a theoretical point of view. The most known is an old work by Culver (\cite{Culv1966}), in which, among other things, the author proves that a non-singular real square matrix $M$ has a real logarithm if and only if each of its Jordan blocks corresponding to a negative eigenvalue occurs an even number of times. Furthermore, Culver provides necessary and sufficient conditions, in terms of the Jordan blocks of M, for the real logarithm of M to be unique and for the set of real logarithms of M to be countable. A simpler exposition of some of these findings can be found in \cite{Nun1989}.

After a first preliminary Section in which, in particular, the main homogeneous spaces involved in the structure theorems of the sets of real logarithms and of skew-symmetric real logarithms are defined, in Section \ref{Top} \ we determine, in the simplest cases, some of the homotopy groups of these homogeneous spaces. 

The main result of Section \ref{LOG} is Theorem \ref{logaritmi}, which states that the set of real logarithms of a given semi-simple matrix is a countable disjoint union of simply connected differentiable submanifolds of $M(n,\mathbb{R})$, all diffeomorphic to suitable homogeneous spaces and whose dimensions depend on the eigenvalues of the matrices constituting each of them. This Theorem also states that the second homotopy group of each of these components is a free abelian group, whose rank can be expressed as a function of the eigenvalues of the matrices constituting the given component.

In Section \ref{Skew-log} we prove a similar Theorem for the set of skew-symmetric real logarithms of a given special orthogonal matrix (see Theorem \ref{manifold-O-con-indici}). In this case, each connected component of this set is a compact submanifold of $\mathfrak{so}(n)$.

At the end of Sections \ref{LOG} and \ref{Skew-log}, respectively, the sets of generalized principal real logarithms and of principal skew-symmetric real logarithms for suitable matrices are studied. The most interesting results we have obtained are stated in Theorems \ref{logar-princ} and \ref{antis-log-princ}. Note that our Definitions \ref{prin-gen} and \ref{prin-antis-log} are more general than the usual definition of  principal logarithm (see \cite[Thm.\,1.31 p.\,20]{Hi2008}). In fact, our principal logarithms are also defined for matrices with negative eigenvalues, even if, in general, they are not unique. For a study of the set of principal skew-symmetric real logarithms of a given special orthogonal matrix see also \cite[\S 3]{DoPe2018}.

We remark that the methods we use in this paper are very similar to those used in \cite{DoPe2020} to study the set of real square roots of suitable real matrices.

\section{Preliminary facts}\label{prelim}

\begin{notas}\label{notazioni}
a) \ In this paper, for any integer $n \geq 1$, we denote

-\ $M(n,\mathbb{R})$: the $\mathbb{R}$-vector space of the real square matrices of order $n$;

-\ $GL(n,\mathbb{R})$ (and $GL^+ (n,\mathbb{R})$): the multiplicative group of the non-singular real matrices of order $n$ (with positive determinant);

-\ $\mathcal{O}(n)$ (and $S\mathcal{O}(n)$): the multiplicative group of the real orthogonal matrices of order $n$ (with determinant $1$);

-\ $\mathfrak{so}(n)$: the Lie algebra of the skew-symmetric real matrices of order $n$;

-\ $M(n,\mathbb{C})$: the $\mathbb{C}$-vector space of the complex square matrices of order $n$;

-\ $GL(n,\mathbb{C})$: the multiplicative group of the non-singular complex matrices of order $n$;

-\ $U(n)$: the multiplicative group of the complex unitary matrices of order $n$; 

-\ $I_{_n}$: the identity matrix of order $n$;

-\ $\mathrm{O}_{_n}$: the null matrix of order n;

-\ ${\bf i}$: the imaginary unit.

\smallskip

We write $\bigsqcup_j X_j$ to emphasize the union of mutually disjoint sets $X_j$; furthermore we denote by $|S|$ the cardinality of any given finite set $S$ \ and by \ $\delta_{(i,j)}$ \ the usual \emph{Kronecker delta} defined by $\delta_{(i,j)}=1$\ \ if \ $i = j$, \ and \ $0$ \ otherwise.
\smallskip

b) \ For every $A \in M(n,\mathbb{C})$, $tr(A)$ is its trace, $A^T$ is its transpose, $A^*:=\overline{A\,}^T$ is its transpose conjugate, $det(A)$ is its determinant and, provided that $det(A) \ne 0$, $A^{-1}$ is its inverse;
furthermore $exp(A)=\sum\limits_{i=0}^\infty \dfrac{A^i}{i!}$ \ denotes the exponential of $A$.

If $C \in GL(n,\mathbb{R})$ , we denote by $\Lambda_{_C}: X \mapsto  CXC^{-1}$ the inner automorphism of $GL(n,\mathbb{R})$ associated to the matrix $C$.

For every $\theta \in \mathbb{R}$, we denote
 $E_{\theta}:=\begin{pmatrix} 
 \cos (\theta) & - \sin (\theta) \\ 
\sin (\theta) &  \cos (\theta)
\end{pmatrix}$
 and $E:= E_{\pi /2}= 
 \begin{pmatrix} 
 0 & -1 \\ 
1 &  0
\end{pmatrix}$; 
hence $E_\theta = \cos (\theta)I_2 + \sin (\theta) E$.

It is easy to check that \ $exp(\delta E)= E_{\delta}$ , for every $\delta \in \mathbb{R}$, and from this we get

$  (\bigtriangleup)  \ \ \ \ \ \ \ \ \ \ \ \ \ \ \ \ \ \ \ \ \ \ \ \  exp(\alpha I_2 +\delta E)= e^{\alpha} E_{\delta}$, \ for every $\alpha, \delta \in \mathbb{R}$ .

If $B_1, \cdots , B_m$ are square matrices (of various orders), $B_1 \oplus \cdots \oplus B_m$ is the block diagonal square matrix with $B_1, \cdots , B_m$ on its diagonal and, for every square matrix $B$, \ \ $B^{\oplus m}$ denotes $B \oplus \dots \oplus B$ ($m$ times). 

It is clear that $exp(B_1 \oplus \cdots \oplus B_m)= exp(B_1) \oplus \cdots \oplus exp(B_m)$ , for every $ B_1, \cdots , B_m$.

If $\mathcal{S}_1, \dots , \mathcal{S}_m$ are sets of square matrices, then  $\mathcal{S}_1 \oplus \dots \oplus \mathcal{S}_m$ denotes the set of all matrices $B_1 \oplus \cdots \oplus B_m$ with $B_j \in \mathcal{S}_j$\ , for every $j$.

To give a full generality to the results of this paper (and to their proofs), it is necessary to establish agreements on the notations that we will use: if \ $s$ is a non-negative integer parameter, whenever, in any formula, we write any term as $\sum\limits_{i=1}^s  \ (\cdots), \ \ \bigoplus\limits_{i=1}^s \ (\cdots)$ \ or \ $\prod\limits_{i=1}^s \ (\cdots)$, \ we mean that, if $s=0$, this sum, this direct sum or this product must not appear in the related formula.
Similar considerations for \ $\sum\limits_{i \in I} \ (\cdots)$\ and\ $\bigoplus\limits_{i \in I} \ (\cdots)$, whenever the set $I$ is empty. 
We also assign a meaning to the zero-order matrices $I_{_0}$,  $\mathrm{O}_{_0}$ and to the zero-order groups $S\mathcal{O}(0), \ \mathcal{O}(0), \ GL(0,\mathbb{R}), \ GL^+(0,\mathbb{R})$, defining them all equal to a single (phantom) point \ $\mathcal{Q}$ \ which, conventionally, satisfies the following conditions:

$\lambda \mathcal{Q} = \mathcal{Q}$, \ for every $\lambda \in \mathbb{C}$; \ \ \ \ \ \ \ $\mathcal{Q} \oplus B = B \oplus \mathcal{Q} = B$, \ for any complex square matrix $B$; \ \ \ \ \ \ \ $\mathcal{Q} \oplus \mathcal{S} = \mathcal{S} \oplus \mathcal{Q} = \mathcal{S}$, \ for any set of complex square matrices $\mathcal{S}$.

Moreover, we also agree that the zero-multiplicity eigenvalues of a given matrix $X \in M(n,\mathbb{C})$ are all complex numbers that are not eigenvalues of $X$, while when we say that $G$ is a free abelian group of rank zero, this means that $G = \lbrace0\rbrace$.

For all other notations and information on matrices, not explicitly recalled here, we refer to \cite{HoJ2013} and to \cite{Hi2008}.
\end{notas}

\begin{remsdefs}\label{rho}
a) The mapping 
$\rho_1: \mathbb{C} \to M(2,\mathbb{R})$ defined by

$\rho_1(z) =  Re(z) I_2 + Im(z) E$, is a monomorphism of $\mathbb{R}$-algebras such that 
$\rho_1({\overline{z}}) = \rho_1(z)^T$
and $\rho_1(z) \in GL(2,\mathbb{R})$ as soon as $z \ne 0$. 

More generally, for any $h \ge 1$, we define the \emph{decomplexification mapping} 

$\rho_h: M(h,\mathbb{C}) \to M(2h,\mathbb{R})$, which maps the $h \times h$ complex matrix $Z=(z_{ij})$ to the $(2h) \times (2h)$ block real matrix  $(\rho_1(z_{ij}))$, having $h^2$ blocks of order $2 \times 2$. 

We have 
$tr(\rho_h(Z)) = 2 Re(tr(Z)))$, $det(\rho_h(Z)) = |det(Z)|^2$ and, moreover, $\rho_h$ is a monomorphism of $\mathbb{R}$-algebras, whose restriction to $GL(h,\mathbb{C})$ is a monomorphism of Lie groups from $GL(h,\mathbb{C})$ into $GL(2h,\mathbb{R})$.

Furthermore we have: $\rho_h(Z^*) = \rho_h(Z)^T$, so the restriction of the monomorphism $\rho_h$ to $U(h)$ maps $U(h)$ into $S\mathcal{O}(2h)$ and $\rho_h (U(h))=  \rho_h (GL(h,\mathbb{C})) \cap S\mathcal{O}(2h) = \rho_h (GL(h,\mathbb{C})) \cap \mathcal{O}(2h)$. 
To simplify the notations and in absence of ambiguity, from now on we omit to write the symbol $\rho_h$, so, for instance, we can consider $M(h,\mathbb{C})$ as an $\mathbb{R}$-subalgebra of $M(2h,\mathbb{R})$, $GL(h,\mathbb{C})$ as a Lie subgroup of $GL(2h,\mathbb{R})$ and  $U(h)$ as a Lie subgroup of $S\mathcal{O}(2h)$.

b) For every matrix $B \in M(n,\mathbb{R})$ we denote
$\mathcal{C}_B := \{X \in GL(n,\mathbb{R}) : BX=XB\}$.
It is easy to prove that $\mathcal{C}_B$ is a closed subgroup of $GL(n,\mathbb{R})$.

If $X \in M(n,\mathbb{R})$, we also denote by $\sigma(X)$ the set of its distinct complex eigenvalues.
\end{remsdefs}
\begin{lemma}\label{commutazione}
Let $D=\bigoplus\limits_{j=1}^t D_j  \in M(n,\mathbb{R})$, where each $D_j$ is a semi-simple real square matrix of order $n_{_j}$ (with $\sum\limits_{j=1}^t n_{_j}=n$) and assume that $\sigma(D_j)\cap\sigma(D_h)=\emptyset$, as soon as $j\neq h$. Then we have \ \ 
$\mathcal{C}_D = \bigoplus\limits_{j=1}^t \mathcal{C}_{D_j} .$
\end{lemma}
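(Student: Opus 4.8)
The inclusion $\bigoplus_{j=1}^t \mathcal{C}_{D_j} \subseteq \mathcal{C}_D$ is immediate: if $X = \bigoplus_j X_j$ with each $X_j \in \mathcal{C}_{D_j}$, then $DX = \bigoplus_j D_j X_j = \bigoplus_j X_j D_j = XD$, and $X$ is non-singular since each $X_j$ is, so $X \in \mathcal{C}_D$. The content is the reverse inclusion. The plan is to take $X \in \mathcal{C}_D$, write it in block form $X = (X_{jh})_{1\le j,h\le t}$ with $X_{jh}$ of size $n_j \times n_h$ according to the decomposition $n = \sum_j n_j$, and show that the off-diagonal blocks vanish; then the diagonal blocks $X_{jj}$ automatically satisfy $D_j X_{jj} = X_{jj} D_j$ and, as will be checked at the end, are non-singular.

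First I would translate the commutation relation $DX = XD$ block by block: since $D$ is block diagonal with $j$-th block $D_j$, the $(j,h)$ block of $DX$ is $D_j X_{jh}$ and that of $XD$ is $X_{jh} D_h$, so the hypothesis is equivalent to the system $D_j X_{jh} = X_{jh} D_h$ for all $j,h$. This is a Sylvester-type equation $D_j Y - Y D_h = \mathrm{O}$. The standard fact is that such an equation has only the trivial solution $Y = \mathrm{O}_{n_j \times n_h}$ precisely when $\sigma(D_j) \cap \sigma(D_h) = \emptyset$; indeed, the linear operator $Y \mapsto D_j Y - Y D_h$ on $M(n_j \times n_h, \mathbb{C})$ has spectrum $\{\lambda - \mu : \lambda \in \sigma(D_j),\ \mu \in \sigma(D_h)\}$, which avoids $0$ exactly under the disjointness hypothesis, so the operator is invertible and its kernel is trivial. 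Since $j \ne h$ forces $\sigma(D_j) \cap \sigma(D_h) = \emptyset$ by assumption, we get $X_{jh} = \mathrm{O}$ for all $j \ne h$, hence $X = \bigoplus_j X_{jj}$ with $D_j X_{jj} = X_{jj} D_j$.

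It remains to argue that each diagonal block $X_{jj}$ lies in $GL(n_j, \mathbb{R})$, i.e. in $\mathcal{C}_{D_j}$ and not merely in the commutant inside $M(n_j,\mathbb{R})$. But $X = \bigoplus_j X_{jj}$ is non-singular, and $\det(X) = \prod_j \det(X_{jj})$, so every $\det(X_{jj}) \ne 0$; thus $X \in \bigoplus_j \mathcal{C}_{D_j}$, completing the proof. The only point requiring care is the Sylvester-equation argument; I would either cite it from \cite{HoJ2013} or give the short self-contained justification above, noting that the hypothesis that the $D_j$ are semi-simple is not even needed for this lemma (it merely ensures the $D_j$ — and $D$ — are themselves diagonalizable over $\mathbb{C}$, which is the context in which the lemma will later be applied). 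The main obstacle, such as it is, is simply making the spectral description of the Sylvester operator precise over $\mathbb{R}$ versus $\mathbb{C}$; this is handled by base-changing to $M(n_j \times n_h, \mathbb{C})$, solving there, and observing that a real matrix equation with only the zero solution over $\mathbb{C}$ has only the zero solution over $\mathbb{R}$.
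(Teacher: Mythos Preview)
Your proof is correct and follows the same overall architecture as the paper: write $X$ in block form, translate $XD=DX$ into the intertwining relations $D_j X_{jh} = X_{jh} D_h$, and kill the off-diagonal blocks. The difference lies in how the off-diagonal blocks are shown to vanish. The paper uses the semi-simplicity hypothesis directly: it chooses an eigenbasis $\{v_1,\dots,v_{n_h}\}$ of $\mathbb{C}^{n_h}$ for $D_h$, observes that $D_j\, X_{jh}(v) = \lambda\, X_{jh}(v)$ whenever $D_h v = \lambda v$, and concludes $X_{jh}(v)=0$ since $\lambda \notin \sigma(D_j)$. Your route instead invokes the spectrum of the Sylvester operator $Y \mapsto D_j Y - Y D_h$, which is $\{\lambda-\mu : \lambda\in\sigma(D_j),\ \mu\in\sigma(D_h)\}$ regardless of diagonalizability. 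As you correctly note, this makes the semi-simplicity assumption superfluous for the lemma itself; the paper's argument is more elementary and self-contained but genuinely uses that hypothesis. Either way the remaining step---that the diagonal blocks are non-singular because $\det X = \prod_j \det X_{jj}$---is the same.
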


\begin{proof}
Any matrix $A \in GL(n, \mathbb{R})$ can be written in blocks as $A= (A_{ij})$, where $A_{ij}$ is an $n_i \times n_j$ real matrix, for $i,j=1, \cdots, t$. The condition $A D =D A$ is equivalent to $A_{ij} D_j = D_i A_{ij}$, for every $1\leq i,j \leq t$. 

Fix $i, j \in \lbrace1, \cdots, t\rbrace$ with $i\neq j$. Since $D_j$ is semi-simple, there is a basis of $\mathbb{C}^{n_j}$, say $\lbrace v_{_1}, \cdots, v_{n_j}\rbrace$, consisting of eigenvectors of $D_j$; hence, if $v$ is an element of this basis, then $D_j(v)=\lambda v$\ , for some $\lambda \in \sigma(D_j)$, and so \ $ D_i A_{ij} (v) = A_{ij} D_j (v) = \lambda A_{ij}(v)\ .$ From the assumptions $\lambda \notin \sigma(D_i)$, hence we conclude that $A_{ij}(v)=0$, for every $v \in \lbrace v_{_1}, \cdots, v_{n_j}\rbrace$, and so $A_{ij}=0$. 
Therefore $A= \bigoplus\limits_{j=1}^t A_{jj}$\ , where $A_{jj} \in GL(n_{_j}, \mathbb{R})$ and $A_{jj} D_j = D_j A_{jj}$\ , for $j=1, \cdots, t$ . This concludes the proof.
\end{proof}

Using elementary arguments similar to those of Lemma \ref{commutazione}, it is easy to prove the following:

\begin{lemma}\label{Ethetacommutaz}
Denote $F=E_{\theta}^{\oplus m} \in GL(2m, \mathbb{R})$, where $0 < \theta < \pi$ and $m \geq 1$. Then $\mathcal{C}_F = GL(m, \mathbb{C}) .$
\end{lemma}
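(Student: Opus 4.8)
The plan is to mimic the block-decomposition argument used in Lemma~\ref{commutazione}, but now working over $\mathbb{C}$ rather than directly over $\mathbb{R}$, since the eigenvalues of $E_\theta$ (namely $e^{\pm {\bf i}\theta}$) are not real. First I would note the inclusion $\supseteq$: writing $F = E_\theta^{\oplus m}$ as the decomplexification $\rho_m(e^{{\bf i}\theta} I_m)$ (this uses the identification of Remarks-Definitions~\ref{rho}, since $\rho_1(e^{{\bf i}\theta}) = \cos(\theta)I_2 + \sin(\theta)E = E_\theta$), any $Z \in GL(m,\mathbb{C})$ satisfies $\rho_m(Z)\rho_m(e^{{\bf i}\theta}I_m) = \rho_m(Z e^{{\bf i}\theta}I_m) = \rho_m(e^{{\bf i}\theta}I_m Z) = \rho_m(e^{{\bf i}\theta}I_m)\rho_m(Z)$ because scalar matrices are central; hence $GL(m,\mathbb{C}) \subseteq \mathcal{C}_F$.

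For the reverse inclusion $\subseteq$, I would take $A \in \mathcal{C}_F \subseteq GL(2m,\mathbb{R})$ and analyze the equation $AF = FA$ over $\mathbb{C}^{2m}$. Decomposing $\mathbb{C}^{2m}$ into the eigenspaces of $F$: since $E_\theta$ has eigenvalues $e^{{\bf i}\theta}$ and $e^{-{\bf i}\theta}$ each with multiplicity $m$ (as $0<\theta<\pi$ forces $e^{{\bf i}\theta}\neq e^{-{\bf i}\theta}$), $F$ is semi-simple with exactly these two eigenvalues, each of multiplicity $m$. A real matrix $A$ commuting with $F$ must preserve each of these two $m$-dimensional eigenspaces $V_{+}$ and $V_{-}$, which are complex-conjugate to each other. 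The key point is then to translate this invariance condition back into a real statement: the real matrices $A$ that preserve $V_+$ (equivalently $V_-$, since $A$ is real) are exactly those that, under a suitable identification, are $\mathbb{C}$-linear with respect to the complex structure $J$ on $\mathbb{R}^{2m}$ induced by $F$ — and here $J = E^{\oplus m}$ is precisely the complex structure giving the identification $\mathbb{R}^{2m} \cong \mathbb{C}^m$ that underlies $\rho_m$. Concretely, writing $F = \cos(\theta) I_{2m} + \sin(\theta) E^{\oplus m}$ and using $\sin(\theta)\neq 0$, the relation $AF = FA$ is equivalent to $A E^{\oplus m} = E^{\oplus m} A$, i.e.\ $A$ commutes with the standard complex structure, which by definition of $\rho_m$ means $A \in \rho_m(M(m,\mathbb{C})) \cap GL(2m,\mathbb{R}) = GL(m,\mathbb{C})$.

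In fact this last observation shortens the whole argument: since $0 < \theta < \pi$ gives $\sin(\theta) \neq 0$, and $F = \cos(\theta)I_{2m} + \sin(\theta)\,E^{\oplus m}$, a matrix $A \in GL(2m,\mathbb{R})$ satisfies $AF = FA$ if and only if $A\,(E^{\oplus m}) = (E^{\oplus m})\,A$; and a real $(2m)\times(2m)$ matrix commutes with $E^{\oplus m}$ if and only if it lies in $\rho_m(M(m,\mathbb{C}))$, by the same block computation as in Lemma~\ref{commutazione} applied to the semi-simple matrix $E^{\oplus m}$ whose single pair of eigenvalues $\pm{\bf i}$ forces the commutant to consist of matrices of the form $\rho_1(z_{ij})$ in each $2\times2$ block. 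Intersecting with $GL(2m,\mathbb{R})$ yields $\mathcal{C}_F = \rho_m(GL(m,\mathbb{C})) = GL(m,\mathbb{C})$, as claimed.

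The main obstacle I anticipate is purely bookkeeping: making precise, with the notational conventions of Remarks-Definitions~\ref{rho} (where $\rho_m$ is suppressed from the notation), the statement ``a real matrix commutes with $E^{\oplus m}$ iff it is the image under $\rho_m$ of a complex matrix.'' This is essentially the classical fact that the centralizer of a complex structure on $\mathbb{R}^{2m}$ is $GL(m,\mathbb{C})$, but one must be careful that $E^{\oplus m}$ — as opposed to some other conjugate of it — is exactly the complex structure built into the definition of $\rho_m$; a short direct check at the level of $2\times2$ blocks (each block $A_{ij}$ must satisfy $A_{ij}E = E A_{ij}$, whose real solutions are exactly $\rho_1(\mathbb{C})$) settles this cleanly, which is why the paper is content to say the argument is ``similar to Lemma~\ref{commutazione}.''
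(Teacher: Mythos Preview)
Your proof is correct and follows exactly the kind of block-wise argument the paper indicates (the paper omits the proof, merely noting it uses ``elementary arguments similar to those of Lemma~\ref{commutazione}''). Your reduction $AF=FA \iff A\,E^{\oplus m}=E^{\oplus m}A$ via $\sin\theta\neq 0$, followed by the $2\times 2$ block computation showing each $A_{ij}\in\rho_1(\mathbb{C})$, is precisely such an argument.
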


\begin{remsdefs}\label{def-spaz-omog}
a) If $G$ is a real Lie group and $H$ is a closed subgroup of $G$, we say that $H$ is a Lie subgroup of $G$. Under this assumption, it is well known that $H$ is also a real Lie group and that the set of left cosets $G/H$ has a unique structure of differentiable manifold (of dimension equal to $\dim_{\mathbb{R}} G - \dim_{\mathbb{R}} H$) such that the natural projection $G \to G/H$ is differentiable and the natural action of $G$ on $G/H$ is a $C^{\infty}$ transitive action. Moreover, $G$ is a principal fiber bundle over $G/H$ with group $H$.

b) In the next Sections we will have to deal with the following homogeneous spaces:
\smallskip

\ \ \ $\widehat{\Theta}_{(\nu_1,\cdots,\nu_s)}=\dfrac{GL(\nu,\mathbb{C})}{\big( \bigoplus\limits_{j=1}^s GL(\nu_j,\mathbb{C})\big)}$\ ,
\ \ \ \ \ \ \ \ \ \ \ $\Theta_{(\nu_1,\cdots,\nu_s)}=\dfrac{U(\nu)}{\big(\bigoplus\limits_{j=1}^s U(\nu_j)\big)}$\ ,
\smallskip

\ \ \ $\widehat{\Gamma}_{(\zeta; \nu_1, \cdots,\nu_s)}=$
$\left\lbrace 
\begin{array}{cc}
\dfrac{GL^+(\zeta+2\nu,\mathbb{R})}{GL^+(\zeta,\mathbb{R})\oplus \big(\bigoplus\limits_{j=1}^{s}GL(\nu_j,\mathbb{C})\big)} \ \ \ \ \ \ \ \ if \ \ \ \ \ \zeta \geq 1 \\
\vphantom{}\\
  \ \ \ \dfrac{GL^+(2\nu,\mathbb{R})}{\big(\bigoplus\limits_{j=1}^s GL(\nu_j,\mathbb{C})\big)} \ \ \ \ \ \ \ \ \ \ \ \ \ \ \ \ \ \ \ if \ \ \ \ \ \zeta=0
 \end{array}
\right. $,
\medskip

\ \ \ $\Gamma_{(\zeta; \nu_1, \cdots,\nu_s) }=\left\lbrace 
\begin{array}{cc}
\dfrac{S\mathcal{O}(\zeta+2\nu)}{S\mathcal{O}(\zeta)\oplus \big(\bigoplus\limits_{j=1}^{s}U(\nu_j)\big)} \ \ \ \ \ \ \ if \ \ \ \ \ \zeta \geq 1 \\
\vphantom{}\\
  \ \ \ \dfrac{S\mathcal{O}(2\nu)}{\big(\bigoplus\limits_{j=1}^s U(\nu_j)\big)} \ \ \ \ \ \ \ \ \ \ \ \ \ if \ \ \ \ \ \zeta=0
 \end{array}
\right. $,
\smallskip
\smallskip

where $\zeta, \nu_1, \cdots, \nu_s$ are integers such that $\zeta \geq 0$,   $\nu_1, \cdots, \nu_s \geq 1 \ (s \geq 1)$ and 

$\nu=\sum\limits_{j=1}^s\nu_j$ .
It is useful to define the spaces $\widehat{\Gamma}_{(\zeta; \nu_1, \cdots,\nu_s)}$ and $\Gamma_{(\zeta; \nu_1, \cdots,\nu_s) }$ \ even when $s=0$ (i.e. when the multi-index $(\zeta; \nu_1, \cdots , \nu_s)$ reduces to $(\zeta)$) and $\zeta \geq 0$, 
by setting them, in all these cases, equal to a single point. Consequently, note that $\Gamma_{(\zeta; \nu_1, \cdots,\nu_s)}$ reduces to a single point if and only if either $s=0, \ \ \zeta \geq 0$ \ or \ $s=1$, $\nu_1= \nu= 1,\ \zeta=0$, while this holds for the space $\widehat{\Gamma}_{(\zeta; \nu_1, \cdots,\nu_s)}$ only for $s=0,\ \ \zeta \geq 0$. Also note that both spaces $\Theta_{(\nu_1,\cdots,\nu_s)}$ and  $\widehat{\Theta}_{(\nu_1,\cdots,\nu_s)}$ are single points if and only if $s=1$ (for every $\nu =\nu_1 \geq 1)$.

c) All the spaces we have defined are connected differentiable manifolds; moreover $\Gamma_{(\zeta; \nu_1,\cdots, \nu_s)}$ and $\Theta_{(\nu_1,\cdots,\nu_s)}$ are also compact. Their  dimensions are the following:\\
$\dim_{\mathbb{R}}\widehat{\Gamma}_{(\zeta; \nu_1,\cdots, \nu_s)}= 4\nu(\nu+\zeta)-2\sum\limits_{j=1}^s \nu_j^2 $,\ \ \ \ 
$\dim_{\mathbb{R}}\widehat{\Theta}_{(\nu_1, \cdots, \nu_s) }=2\nu^2-2\sum\limits_{j=1}^s \nu_j^2 $,\\
$\dim_{\mathbb{R}}\Gamma_{(\zeta; \nu_1,\cdots, \nu_s)}= \nu(2\nu+2\zeta-1)-\sum\limits_{j=1}^s \nu_j^2 $,\ \ \ \ 
$\dim_{\mathbb{R}}\Theta_{(\nu_1, \cdots, \nu_s) }=\nu^2-\sum\limits_{j=1}^s \nu_j^2 $.
We also observe that $\Gamma_{(0, \nu)}, \widehat{\Gamma}_{(0, \nu)}, \Theta_{(\nu_{_1}, \nu_{_2})}, \widehat{\Theta}_{(\nu_{_1}, \nu_{_2})}$ are symmetric spaces, for every $\nu, \nu_{_1}, \nu_{_2} \geq 1.$ Moreover, it can be easily seen that $\Gamma_{(0, 2)}$ is diffeomorphic to the $2$-dimensional sphere.

d) Note that, if $\zeta \geq 1$, the homogeneous spaces $\dfrac{GL(\zeta+2\nu,\mathbb{R})}{GL(\zeta,\mathbb{R})\oplus \big(\bigoplus\limits_{j=1}^{s}GL(\nu_j,\mathbb{C})\big)}$ and 
$\dfrac{\mathcal{O}(\zeta+2\nu)}{\mathcal{O}(\zeta)\oplus \big(\bigoplus\limits_{j=1}^s U(\nu_{j})\big)}$ are diffeomorphic to $\widehat{\Gamma}_{(\zeta; \nu_1, \cdots,\nu_s)}$ and  $\Gamma_{(\zeta; \nu_1, \cdots,\nu_s)}$, respectively (and so they are connected), while, for $\zeta =0$, the spaces $\dfrac{GL(2\nu,\mathbb{R})}{\big(\bigoplus\limits_{j=1}^{s}GL(\nu_j,\mathbb{C})\big)}$ and $\dfrac{\mathcal{O}(2\nu)}{\big(\bigoplus\limits_{j=1}^s U(\nu_{j})\big)}$ have two connected components both diffeomorphic to $\widehat{\Gamma}_{(0; \nu_1, \cdots,\nu_s)}$ and  $\Gamma_{(0; \nu_1, \cdots,\nu_s)}$, respectively;
hence we can say that 
$\dfrac{GL(\zeta+2\nu,\mathbb{R})}{GL(\zeta,\mathbb{R})\oplus \big(\bigoplus\limits_{j=1}^{s}GL(\nu_j,\mathbb{C})\big)}$ and 
$\dfrac{\mathcal{O}(\zeta+2\nu)}{\mathcal{O}(\zeta)\oplus \big(\bigoplus\limits_{j=1}^s U(\nu_{j})\big)}$ \ have \ $2^{\delta_{_{(\zeta,0)}}}$ connected components (if $\zeta +s \geq 1$).
 
\end{remsdefs}

\begin{defi}

Let $M \in M(n,\mathbb{R})$ any matrix. 
We call \emph{real logarithm of $M$} every matrix $X$ of $M(n,\mathbb{R})$, solving the matrix equation $exp(X)=M$ .
\end{defi}

\begin{rem}\label{esist-rad-quadr}
It is well known that $exp(X) \in GL^+(n,\mathbb{R}) $, for every $X \in M(n,\mathbb{R})$ ; hence no matrix with non-positive determinant has real logarithms. Moreover, also the following fact is known (see for instance \cite[Thm.\,1]{Culv1966} or \cite[Thm.\,1.23]{Hi2008}): 

$M \in GL^+(n,\mathbb{R})$ has at least one real logarithm if and only if it has an even number of Jordan blocks of each size, for every negative eigenvalue. 

Note that, if the matrix $M \in GL^+(n,\mathbb{R})$ is semi-simple, then it has at least one real logarithm if and only if each of its negative eigenvalues has even multiplicity.
\end{rem}

\begin{notas}\label{def-XSR}
Assume that $M \in GL^+(n,\mathbb{R})$ is semi-simple and that its (possible) negative eigenvalues have even multiplicity. 

We want to study the following sets:

\smallskip

$\mathcal{L}og(M)$, the set of all real logarithms of $M$ (see \S \ref{LOG});

\smallskip

$\mathcal{L}og_{{\mathfrak{so}(n)}}(M) :=\mathcal{L}og(M) \cap \mathfrak{so}(n)$, the set of all skew-symmetric real logarithms of $M$, when $M$ is supposed to be an element of $S\mathcal{O}(n)$ (see \S \ref{Skew-log}).
\end{notas}

\begin{rem}\label{Popov}
Let $G$ be a real Lie group acting smoothly on a differentiable manifold $X$. The orbit of every $x \in X$ is an immersed submanifold of $X$, diffeomorphic to the homogeneous space $\dfrac{G}{G_x}$, where $G_x$ is the isotropy subgroup of $G$ at $x$. 

This submanifold is not necessarily embedded in $X$, but, if $G$ is compact, then all orbits are embedded submanifolds (see \cite{Orbit}). 
\end{rem}

\section{Some remarks on the homotopy groups of homogeneous spaces}\label{Top}

As we will see, the homogeneous spaces we have defined in Remarks-Definitions \ref{def-spaz-omog} are involved in the study of the real logarithms of an arbitrary matrix. For this reason, in this section we will study some of their topological properties. We begin with some general properties concerning homogeneous spaces.

\begin{rem}\label{Omotopia}
Let $G$ be a connected Lie group with identity $e$ and let $H$ be any connected Lie subgroup of $G$. Denoted by $G/H$ the related homogeneous space and by $\lbrace e \rbrace= H$ the equivalence class of $e$ in the quotient $G/H$, it is known that we have the following exact homotopy sequence, induced by the fibration on the quotient (see \cite [p.90]{Steen1951}):

$\cdots \overset{\delta}{\longrightarrow} \pi_i (H) \overset{\psi}{\longrightarrow} \pi_i (G) \overset {\xi}{\longrightarrow}\pi_i (G/H) \overset{\delta}{\longrightarrow} \cdots \overset {\xi}{\longrightarrow} \pi_2 (G/H) \overset{\delta}{\longrightarrow} \pi_1 (H) \overset{\psi}{\longrightarrow}\pi_1 (G) \overset {\xi}{\longrightarrow} \pi_1 (G/H) \to 0$  .

In this sequence the homotopy groups are based at the point $e$ for $G$ and $H$ and at the point $\lbrace e \rbrace$ for $G/H$ ; the mappings $\psi$ and $\xi$ are, respectively, the homomorphisms induced by the natural inclusion $H \to G$ and by the projection on the quotient $G \to G/H$, while the mappings $\delta$ are the connecting homomorphisms.
\end{rem}

\begin{lemma}\label{fivelemma}
Let $G'$, $H$ and $H'$ be connected Lie subgroups of a connected Lie group $G$, such that $H' \subset G'\cap H$. Suppose $G'$ is a deformation retract of $G$ and $H'$ is a deformation retract of $H$ . Then $\pi_i(G/H)\cong \pi_i(G'/H')$, for every $i \geq 1$.

\end{lemma}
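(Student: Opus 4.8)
The plan is to use the naturality of the long exact homotopy sequence of a fibration, together with the fact that a deformation retraction induces isomorphisms on all homotopy groups. First I would observe that the inclusion $H' \hookrightarrow H$ is a homotopy equivalence (being a deformation retract inclusion) and similarly $G' \hookrightarrow G$; hence the induced maps $\pi_i(H') \to \pi_i(H)$ and $\pi_i(G') \to \pi_i(G)$ are isomorphisms for every $i \geq 0$. Since $H' \subset G' \cap H \subset G'$ and $H' \subset H \subset G$, the inclusions assemble into a commutative diagram of pairs $(G',H') \to (G,H)$, which by Remark \ref{Omotopia} gives a commutative ladder between the two long exact homotopy sequences
\[
\cdots \to \pi_i(H') \to \pi_i(G') \to \pi_i(G'/H') \to \pi_{i-1}(H') \to \cdots
\]
and
\[
\cdots \to \pi_i(H) \to \pi_i(G) \to \pi_i(G/H) \to \pi_{i-1}(H) \to \cdots,
\]
where the vertical maps on the $\pi_i(H')\to\pi_i(H)$ and $\pi_i(G')\to\pi_i(G)$ columns are the retraction isomorphisms just mentioned, and the vertical map $\pi_i(G'/H') \to \pi_i(G/H)$ is induced by the natural map of quotients $gH' \mapsto gH$ (well defined since $H' \subset H$).

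The core of the argument is then the Five Lemma: in each window of the ladder, four of the five comparison maps are isomorphisms (two from the $H$-columns, two from the $G$-columns), so the fifth, namely $\pi_i(G'/H') \to \pi_i(G/H)$, is an isomorphism as well. One must be slightly careful at the low end of the sequence, where the groups are merely pointed sets and not groups (the $\pi_0$ and $\pi_1$ terms in the appropriate spots); but since $G, G', H, H'$ are all connected, the relevant $\pi_0$ terms vanish, and for $\pi_1(G/H)$ one invokes the exactness ending in $0$ displayed in Remark \ref{Omotopia} together with the standard version of the Five Lemma valid for the exact sequence of a fibration (or simply notes that $\pi_1(G/H)$ sits in a short exact sequence of groups determined by $\pi_1(H) \to \pi_1(G)$, which is the same for the primed data). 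This handles all $i \geq 1$ uniformly.

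The main obstacle — really the only point requiring care rather than bookkeeping — is verifying that the square
\[
\begin{CD}
\pi_i(G') @>>> \pi_i(G'/H')\\
@VVV @VVV\\
\pi_i(G) @>>> \pi_i(G/H)
\end{CD}
\]
and the analogous square involving the connecting homomorphisms $\delta$ genuinely commute. This is exactly the functoriality of the long exact homotopy sequence with respect to maps of fibrations: the map of total spaces $G' \to G$ covers the map of base spaces $G'/H' \to G/H$ and restricts to $H' \to H$ on fibers, so all three induced maps are compatible with the $\psi$, $\xi$ and $\delta$ homomorphisms. Once this naturality is in hand, the Five Lemma does the rest mechanically, and no explicit computation of any homotopy group is needed.
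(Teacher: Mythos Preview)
Your proposal is correct and follows essentially the same approach as the paper: set up the commutative ladder between the long exact homotopy sequences of the two fibrations $G' \to G'/H'$ and $G \to G/H$, observe that the deformation retractions make the vertical maps on the $H$- and $G$-columns isomorphisms, and apply the Five Lemma (in its form valid for non-abelian groups) to conclude. The paper handles the low end exactly as you do, by noting that connectedness forces all the relevant $\pi_0$ terms to be trivial.
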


\begin{proof}
From the assumptions, it follows that the natural inclusion: $G' \to G$ is a bundle morphism, i.e. there exist a (natural) inclusion map: $G'/H' \to G/H$ such that the diagram 
$\begin{CD}
G' @>>> G\\
@VVV @VVV\\
G'/H' @>>> G/H
\end{CD}$
commutes. Then, for every $i \geq 2$, we get the following commutative diagram, where the rows are exact sequences (see \cite [p.91]{Steen1951}):

$\begin{CD}
\cdots \pi_i (H') @>\psi'>>\pi_i (G')@>\xi'>>\pi_i(G'/H')@>\delta'>>\pi_{i-1} (H') @>\psi'>>\pi_{i-1} (G') \cdots\\
@VV{f_i}V @VV{j_i}V @VV{l_i}V @VV{f_{i-1}}V @VV{j_{i-1}}V\\
\cdots \pi_i (H) @>\psi>>\pi_i (G)@>\xi>>\pi_i(G/H)@>\delta>>\pi_{i-1} (H) @>\psi>>\pi_{i-1} (G) \cdots
\end{CD}$

\smallskip

\smallskip

Here the maps $f_i$, $j_i$ and $l_i$ are the homomorphisms induced by the natural inclusions. Since all groups are connected, if we define, as usual, $\pi_0(G)= \pi_0(G')= \pi_0(H)=\pi_0(H')$ $ = \lbrace0\rbrace$, the previous commutative diagram also remains valid for $i= 1$. 
Furthermore, since $G'$ and $H'$ are deformation retracts of $G$ and $H$, respectively, all the mappings $f_r, j_r$ are isomorphisms, so that, by the classical Five-Lemma (see, for instance, \cite [p. 129]{Hat2001}, in which the proof also works for non-abelian groups), the mappings $l_i$ are isomorphisms too , for every $i \geq 1$. This concludes the proof of the Lemma.

\end{proof}

\begin{prop}\label{omotopie-isomorfe}
Let $\zeta, \nu_1, \cdots, \nu_s$ be integers such that $\zeta \geq 0$ and $\nu_j \geq 1$, for $j=1, \cdots, s \ \ (s \geq 1)$. Then, for every $i \geq 1$,

$\pi_i(\widehat{\Gamma}_{(\zeta; \nu_1, \cdots, \nu_s)})\cong \pi_i(\Gamma_{(\zeta; \nu_1, \cdots, \nu_s)})$ \ \ \  and \ \ \ 
$\pi_i(\widehat{\Theta}_{(\nu_1, \cdots, \nu_s)})\cong \pi_i(\Theta_{(\nu_1, \cdots, \nu_s)})$. 

\end{prop}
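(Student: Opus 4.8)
The plan is to deduce Proposition \ref{omotopie-isomorfe} from Lemma \ref{fivelemma} by exhibiting, in each of the two cases, an appropriate pair of deformation retracts. Recall that the classical polar decomposition gives a deformation retraction of $GL(m,\mathbb{C})$ onto $U(m)$ and of $GL^+(m,\mathbb{R})$ onto $S\mathcal{O}(m)$ (and of $GL(m,\mathbb{R})$ onto $\mathcal{O}(m)$): the retraction $A \mapsto A\,(A^*A)^{-t/2}$, $t \in [0,1]$, is a strong deformation retraction, continuous in $A$ and $t$, and it carries the compact groups to themselves identically. So for the $\widehat{\Theta}/\Theta$ statement I would take $G = GL(\nu,\mathbb{C})$, $G' = U(\nu)$, $H = \bigoplus_{j=1}^s GL(\nu_j,\mathbb{C})$ and $H' = \bigoplus_{j=1}^s U(\nu_j)$; the block-diagonal structure is preserved by the polar retraction applied blockwise, so $H'$ is a deformation retract of $H$ with $H' \subset G' \cap H$, and Lemma \ref{fivelemma} applies directly.

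For the $\widehat{\Gamma}/\Gamma$ statement I would argue similarly but need to be slightly careful about the connectedness hypothesis in Lemma \ref{fivelemma} and about the $\zeta = 0$ versus $\zeta \geq 1$ split. When $\zeta \geq 1$, by Remarks-Definitions \ref{def-spaz-omog}(d) the space $\widehat{\Gamma}_{(\zeta;\nu_1,\dots,\nu_s)}$ is diffeomorphic to $GL(\zeta+2\nu,\mathbb{R})/\big(GL(\zeta,\mathbb{R})\oplus\bigoplus_j GL(\nu_j,\mathbb{C})\big)$, so I may take $G = GL(\zeta+2\nu,\mathbb{R})$, $G' = S\mathcal{O}(\zeta+2\nu)$, $H = GL(\zeta,\mathbb{R})\oplus\bigoplus_j GL(\nu_j,\mathbb{C})$, $H' = S\mathcal{O}(\zeta)\oplus\bigoplus_j U(\nu_j)$; again the blockwise polar retraction does the job, noting that $GL(\zeta,\mathbb{R})$ retracts onto $\mathcal{O}(\zeta)$ but the relevant subgroup $H$ here is the one with $GL(\zeta,\mathbb{R})$, which retracts onto $\mathcal{O}(\zeta)$, not onto $S\mathcal{O}(\zeta)$ — this mismatch is why one should use the version of Remarks-Definitions \ref{def-spaz-omog}(d) with $GL(\zeta,\mathbb{R})$ and $\mathcal{O}(\zeta)$ in the bottom, so that $G' = S\mathcal{O}(\zeta+2\nu)$ is not quite right. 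The cleanest fix is to instead use $G = GL^+(\zeta+2\nu,\mathbb{R})$ directly (the definition of $\widehat{\Gamma}$) together with $H = GL^+(\zeta,\mathbb{R})\oplus\bigoplus_j GL(\nu_j,\mathbb{C})$, $G' = S\mathcal{O}(\zeta+2\nu)$, $H' = S\mathcal{O}(\zeta)\oplus\bigoplus_j U(\nu_j)$, all of which are connected, and check that blockwise polar decomposition retracts $GL^+$ onto $S\mathcal{O}$ and the block subgroup $H$ onto $H'$ while keeping $H' \subset G'\cap H$.

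For the $\zeta = 0$ case, $G = GL^+(2\nu,\mathbb{R})$, $H = \bigoplus_j GL(\nu_j,\mathbb{C})$, $G' = S\mathcal{O}(2\nu)$, $H' = \bigoplus_j U(\nu_j)$; everything is connected, the polar retraction is blockwise and preserves $H' \subset G'\cap H$, so Lemma \ref{fivelemma} again gives the conclusion for all $i \geq 1$. (The degenerate case $s = 0$ is vacuous here since the hypothesis demands $s \geq 1$; and when both quotient spaces reduce to points the isomorphism is trivial.)

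The only genuine point requiring care — the part I would call the main obstacle, though it is more bookkeeping than difficulty — is verifying that the polar decomposition retraction restricts correctly to the block-diagonal subgroups and that, in the real case, one has the connectedness needed to invoke Lemma \ref{fivelemma}; this is precisely why I would phrase the real case using $GL^+$ and $S\mathcal{O}$ throughout rather than $GL$ and $\mathcal{O}$, and why the diffeomorphisms recorded in Remarks-Definitions \ref{def-spaz-omog}(d) are invoked only to justify working with the $GL^+$ model of $\widehat{\Gamma}$. Once the four retraction statements are in place, Lemma \ref{fivelemma} delivers both isomorphisms immediately.
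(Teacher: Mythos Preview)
Your proposal is correct and follows essentially the same route as the paper: both arguments use polar decomposition to show that $S\mathcal{O}(m)$, $U(m)$, and the relevant block-diagonal subgroups are deformation retracts of $GL^+(m,\mathbb{R})$, $GL(m,\mathbb{C})$, and the corresponding block-diagonal groups, and then invoke Lemma~\ref{fivelemma}. Your extra care about connectedness and the choice of the $GL^+$ model is sound bookkeeping, but the underlying method is identical to the paper's.
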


\begin{proof}
At first, we prove that $S\mathcal{O}(n)$ is a deformation retract of $GL^+(n,\mathbb{R})\ \ (n\geq 1)$. \\If $X \in GL^+(n,\mathbb{R})$, by polar decomposition (see \cite[Thm.\,7.3.1  p.\,449]{HoJ2013}), we can write $X= (X X^T)^{1/2} \big((X X^T)^{-1/2}\cdot X\big)$, where $(X X^T)^{1/2}$ is a positive definite symmetric real matrix of order $n$ and $\big((X X^T)^{-1/2}\cdot X \big) \in S\mathcal{O}(n)$. Denoted by $log((X X^T)^{1/2})$ the unique real symmetric logarithm of the positive definite matrix $(X X^T)^{1/2}$, by $j: S\mathcal{O}(n) \to  GL^+(n,\mathbb{R})$ the natural inclusion, 
by $\widehat{r}: GL^+(n,\mathbb{R}) \to S\mathcal{O}(n)$ the retraction such that $X \mapsto \big((X X^T)^{-1/2}\cdot X\big)$, we can define \\$H(X,t)= \exp (t\cdot log((X X^T)^{1/2})) \big((X X^T)^{-1/2}\cdot X\big)$, for every $X \in GL^+(n,\mathbb{R})$ and $t \in [0,1]$. $H$ is a $C^\infty$ homotopy between $j \circ \widehat{r}$ and the identity map of $GL^+(n,\mathbb{R})$, so that $S\mathcal{O}(n)$ is a deformation retract of $GL^+(n,\mathbb{R})$.\\
Likewise, it is possible to prove that $U(n)$ is a deformation retract of $GL(n,\mathbb{C})$, $\big(\bigoplus\limits_{j=1}^s U(\nu_j)\big)$ is a deformation retract of $\big(\bigoplus\limits_{j=1}^s GL(\nu_j,\mathbb{C})\big)$, and $S\mathcal{O}(\zeta)\oplus \big(\bigoplus\limits_{j=1}^{s}U(\nu_j)\big)$ is a deformation retract of
$GL^+(\zeta,\mathbb{R})\oplus \big(\bigoplus\limits_{j=1}^{s}GL(\nu_j,\mathbb{C})\big)$.
Hence the Proposition follows from Lemma \ref{fivelemma}.
\end{proof}

\begin{rem}\label{Quozienti}

Remembering Remarks-Definitions \ref{def-spaz-omog} (b), the spaces $\Gamma_{(0; 1) }$, $\Gamma_{(\zeta) }$ and $\Theta_{(\nu)}$ reduce to a single point and so their homotopy groups are trivial.  

We also recall that the so-called \emph{stable} homotopy groups of the symmetric spaces 
$\Gamma_{(0,\nu)}=\dfrac{S\mathcal{O}(2\nu)}{U(\nu)}$ have been computed by R. Bott in his fundamental work \cite{Bott1959}, while results about \emph{unstable} homotopy groups of $\Gamma_{(0,\nu)}$ have been obtained by various other authors (see, for instance, \cite{Harris1963}, \cite{Oshima1984} and \cite{Mukai1990}).
Among the known results, we will use the following:
\end{rem}

\begin{prop}\label{Bott}
The manifold $\Gamma_{(0,\nu)}$ is simply connected and $\pi_2(\Gamma_{(0,\nu)})\cong \mathbb{Z}\ ,$ for every $\nu \geq 2$.
\end{prop}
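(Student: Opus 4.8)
The plan is to exploit the fibration $U(\nu) \hookrightarrow S\mathcal{O}(2\nu) \to \Gamma_{(0,\nu)}$ together with the long exact homotopy sequence of Remark \ref{Omotopia}, and to feed into it the classical low-dimensional homotopy of the relevant Lie groups. Concretely, I would write down the tail
\[
\pi_2(S\mathcal{O}(2\nu)) \overset{\xi}{\longrightarrow} \pi_2(\Gamma_{(0,\nu)}) \overset{\delta}{\longrightarrow} \pi_1(U(\nu)) \overset{\psi}{\longrightarrow} \pi_1(S\mathcal{O}(2\nu)) \overset{\xi}{\longrightarrow} \pi_1(\Gamma_{(0,\nu)}) \to 0 .
\]
The inputs I need are the standard facts: $\pi_1(S\mathcal{O}(2\nu)) \cong \mathbb{Z}/2\mathbb{Z}$ for $2\nu \geq 3$ (so for $\nu \geq 2$), $\pi_2$ of any compact Lie group is trivial, hence $\pi_2(S\mathcal{O}(2\nu)) = 0$, and $\pi_1(U(\nu)) \cong \mathbb{Z}$, with the inclusion-induced map $\psi\colon \pi_1(U(\nu)) \to \pi_1(S\mathcal{O}(2\nu))$ being the reduction $\mathbb{Z} \to \mathbb{Z}/2\mathbb{Z}$ (the generator of $\pi_1(U(\nu))$, a loop of ``degree one'' in the determinant circle, maps to the nontrivial element of $\pi_1(S\mathcal{O}(2\nu))$, since a maximal torus circle of $U(\nu)$ sits in $S\mathcal{O}(2\nu)$ as a non-contractible circle).

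Granting these, the argument is then purely diagram-chasing. Surjectivity of $\psi$ forces $\xi\colon \pi_1(S\mathcal{O}(2\nu)) \to \pi_1(\Gamma_{(0,\nu)})$ to be the zero map (its image is $\pi_1(S\mathcal{O}(2\nu))/\mathrm{im}\,\psi = 0$), and since $\xi$ is also surjective onto $\pi_1(\Gamma_{(0,\nu)})$, we conclude $\pi_1(\Gamma_{(0,\nu)}) = 0$, i.e. $\Gamma_{(0,\nu)}$ is simply connected. For the second homotopy group, exactness at $\pi_1(U(\nu))$ gives $\mathrm{im}\,\delta = \ker\psi = 2\mathbb{Z} \cong \mathbb{Z}$, while exactness at $\pi_2(\Gamma_{(0,\nu)})$ together with $\pi_2(S\mathcal{O}(2\nu)) = 0$ shows $\delta$ is injective; hence $\delta$ restricts to an isomorphism $\pi_2(\Gamma_{(0,\nu)}) \xrightarrow{\ \cong\ } 2\mathbb{Z} \cong \mathbb{Z}$.

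The one point requiring genuine care — the main obstacle — is the identification of $\psi\colon \pi_1(U(\nu)) \to \pi_1(S\mathcal{O}(2\nu))$ as a surjection (equivalently, that the standard generator of $\pi_1(U(\nu))$ is not killed). For $\nu = 1$ this is the inclusion $U(1) = S\mathcal{O}(2) \hookrightarrow S\mathcal{O}(2)$, but $S\mathcal{O}(2)$ is abelian so that low case needs the hypothesis $\nu \geq 2$ anyway; for $\nu \geq 2$ one can argue that a generating loop of $\pi_1(U(\nu))$ may be taken inside $U(2) \subset S\mathcal{O}(4)$, or inside the image of $U(1) \hookrightarrow U(\nu)$, and then verify that the corresponding circle in $S\mathcal{O}(2\nu)$ represents the nontrivial class by computing that it lifts to a non-closed path in $\mathrm{Spin}(2\nu)$ — or, more elementarily, by invoking the well-known fact that the composite $U(1) \to U(\nu) \to S\mathcal{O}(2\nu)$ induces a surjection on $\pi_1$. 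Alternatively, and perhaps most cleanly for this paper, one may cite Remarks-Definitions \ref{def-spaz-omog}(c), where it is noted that $\Gamma_{(0,2)}$ is diffeomorphic to $S^2$ (handling $\nu = 2$ directly with $\pi_1(S^2) = 0$, $\pi_2(S^2) \cong \mathbb{Z}$), and then reference Bott's computation of the stable homotopy of $\Gamma_{(0,\nu)} = S\mathcal{O}(2\nu)/U(\nu)$ from \cite{Bott1959} for the remaining cases, since $\pi_1$ and $\pi_2$ are stable in this range. I would present the exact-sequence argument as the main proof and mention the $S^2$ identification and Bott's theorem as corroboration.
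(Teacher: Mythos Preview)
Your argument is correct. Note, however, that the paper does not actually \emph{prove} this proposition: it is stated immediately after Remark~\ref{Quozienti} as a known result, attributed to Bott~\cite{Bott1959} and the related literature on the homotopy of $S\mathcal{O}(2\nu)/U(\nu)$. So there is no ``paper's own proof'' to compare against in the strict sense.

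That said, your exact-sequence argument is precisely the $s=1$, $\zeta=0$ specialization of the proof the paper gives for Proposition~\ref{Gamma-omotopia}. There, the paper introduces the loops $\alpha_j$ (in $U(\nu_j)$) and $\beta_j$ (their images in $S\mathcal{O}(\zeta+2\nu)$) and shows that each $[\beta_j]$ is the nontrivial generator of $\pi_1(S\mathcal{O}(\zeta+2\nu))\cong\mathbb{Z}/2$; this is exactly your verification that $\psi\colon\pi_1(U(\nu))\to\pi_1(S\mathcal{O}(2\nu))$ is reduction mod~$2$. The paper excludes the case $\zeta=0$, $s=1$ from Proposition~\ref{Gamma-omotopia} and cites Bott instead, but your write-up shows that this exclusion was a matter of exposition rather than necessity: the same loop computation handles $\Gamma_{(0,\nu)}$ for $\nu\geq 2$ without appeal to the literature. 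Your suggestion to mention the $\Gamma_{(0,2)}\cong S^2$ identification (already recorded in Remarks-Definitions~\ref{def-spaz-omog}(c)) as a sanity check is reasonable but not needed for the argument.
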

We will study the other cases in the next Propositions of this Section.

\begin{prop}\label{Gamma-omotopia}
Let  $\zeta, \nu_1, \cdots, \nu_s$ be integers such that $\zeta \geq 0$, $\nu_1, \cdots, \nu_s \geq 1$  ($s \geq 1$); \ assume either \ $\zeta \geq1$ \ or \ $s \geq 2$, \ and \ set \ $\nu= \sum\limits_{j=1}^s \nu_j$ . Then

a)  \ \  $\Gamma_{(\zeta; \nu_1, \cdots, \nu_s) }$ is simply connected;

b)  \ \  $\pi_2 (\Gamma_{(\zeta; \nu_1, \cdots, \nu_s) })$ is isomorphic to $\mathbb{Z}^s$, if $\zeta \neq 2$, while $\pi_2 (\Gamma_{(2; \nu_1, \cdots, \nu_s) })$ is isomorphic to $\mathbb{Z}^{s+1}$;

c)  \ \  if $\nu_1 = \cdots = \nu_s =1$ and $\zeta=0, 1, 2$,\ \ then $\pi_i (\Gamma_{(\zeta; 1, \cdots, 1) })$ is isomorphic to \\ $\pi_i (S\mathcal{O}(\zeta+2s))$, for every $i \geq 3$.
\end{prop}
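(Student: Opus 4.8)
The plan is to study the homotopy sequences of the three fibrations defining $\Gamma_{(\zeta;\nu_1,\dots,\nu_s)}$, using Proposition \ref{omotopie-isomorfe} to replace every $GL$ by its compact form, so that throughout we work with $S\mathcal{O}(\zeta+2\nu)$ and its subgroup $K := S\mathcal{O}(\zeta)\oplus\big(\bigoplus_{j=1}^s U(\nu_j)\big)$ (with the $S\mathcal{O}(\zeta)$ factor absent when $\zeta=0$). The key input is the low-degree homotopy of the structure groups: $\pi_1(U(\nu_j))\cong\mathbb Z$, $\pi_2(U(\nu_j))=0$; $\pi_1(S\mathcal{O}(m))\cong\mathbb Z/2$ for $m\ge 3$, $\pi_1(S\mathcal O(2))\cong\mathbb Z$, $\pi_1(S\mathcal O(1))=0$, and $\pi_2(S\mathcal O(m))=0$ for all $m$; together with $\pi_1(S\mathcal O(n))\cong\mathbb Z/2$ for $n\ge 3$, $\pi_2(S\mathcal O(n))=0$ for all $n$. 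First I would assemble $\pi_1(K)$ and $\pi_2(K)=0$ from the product decomposition, and separately note $\pi_2(S\mathcal O(\zeta+2\nu))=0$.

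For part (a), I would read off the tail of the homotopy sequence
\[
\pi_1(K)\xrightarrow{\ \psi\ }\pi_1(S\mathcal O(\zeta+2\nu))\xrightarrow{\ \xi\ }\pi_1(\Gamma)\to 0 .
\]
Since $\zeta+2\nu\ge 3$ under the hypothesis ($\zeta\ge 1$ or $s\ge 2$ forces $\zeta+2\nu\ge 3$, the only borderline case $\zeta=0,\nu=2$ being covered), $\pi_1(S\mathcal O(\zeta+2\nu))\cong\mathbb Z/2$, generated by a loop that already lies in an $S\mathcal O(2)$ or $S\mathcal O(3)$ block inside $K$; hence $\psi$ is surjective and $\pi_1(\Gamma)=0$. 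The slightly delicate point is to exhibit that generator inside $K$: when $s\ge 1$ one can use the image in $U(\nu_1)\subset S\mathcal O(2\nu_1)$ of a generator of $\pi_1(U(1))$, which maps to the nontrivial element of $\pi_1(S\mathcal O(\zeta+2\nu))$ because the composite $U(1)\hookrightarrow S\mathcal O(2)\hookrightarrow S\mathcal O(\zeta+2\nu)$ is (up to homotopy) the standard inclusion. This is the main obstacle of the whole proof — everything else is bookkeeping with known groups — and it is handled by the explicit block description of $K$.

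For part (b), with $\pi_1(\Gamma)=0$ and $\pi_2(S\mathcal O(\zeta+2\nu))=0$, the relevant segment is
\[
0=\pi_2(S\mathcal O(\zeta+2\nu))\longrightarrow\pi_2(\Gamma)\xrightarrow{\ \delta\ }\pi_1(K)\xrightarrow{\ \psi\ }\pi_1(S\mathcal O(\zeta+2\nu))\to 0,
\]
so $\delta$ identifies $\pi_2(\Gamma)$ with $\ker\psi$. Now $\pi_1(K)\cong\mathbb Z^{\,a}\oplus(\mathbb Z/2)^{\,b}$ where $a$ counts the $\mathbb Z$-summands — one $\mathbb Z$ from each $U(\nu_j)$, plus an extra $\mathbb Z$ exactly when $\zeta=2$ (from $\pi_1(S\mathcal O(2))$), while $\zeta=0,1$ and $\zeta\ge 3$ contribute no free part — hence $a=s$ if $\zeta\ne 2$ and $a=s+1$ if $\zeta=2$. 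The map $\psi$ kills the free part (its target is torsion, being $\mathbb Z/2$) and is surjective onto $\mathbb Z/2$, so $\ker\psi$ contains the full free part $\mathbb Z^a$ and a torsion subgroup; but $\pi_2(\Gamma)$ is a homotopy group of a compact manifold known to be torsion-free in this family (indeed it injects into $\pi_1(K)$ and one checks the $\mathbb Z/2$-part is not in the kernel by the block argument of part (a)), giving $\pi_2(\Gamma)\cong\mathbb Z^a$, i.e. $\mathbb Z^s$ for $\zeta\ne 2$ and $\mathbb Z^{s+1}$ for $\zeta=2$. (When $\zeta=0$, $K=\bigoplus U(\nu_j)$ and $\pi_1(K)\cong\mathbb Z^s$ with $\psi$ surjecting onto $\mathbb Z/2$; the same conclusion holds provided the kernel has rank $s$, which follows since the image of the diagonal-type generator is nonzero as in (a).)

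For part (c), take $\nu_1=\dots=\nu_s=1$, so $K=S\mathcal O(\zeta)\oplus T^s$ with $T^s$ the standard maximal torus of $U(1)^s$, and $\zeta\in\{0,1,2\}$. In each of these three cases $S\mathcal O(\zeta)$ is a point or $SO(2)=T^1$, so $K$ is itself a torus $T^{s}$ (for $\zeta=0,1$) or $T^{s+1}$ (for $\zeta=2$). A torus has vanishing $\pi_i$ for $i\ge 2$, so from the homotopy sequence
\[
\pi_i(K)\to\pi_i(S\mathcal O(\zeta+2s))\xrightarrow{\ \xi\ }\pi_i(\Gamma)\xrightarrow{\ \delta\ }\pi_{i-1}(K)
\]
we get, for $i\ge 3$, that $\pi_{i-1}(K)=0$ and $\pi_i(K)=0$, whence $\xi$ is an isomorphism $\pi_i(S\mathcal O(\zeta+2s))\xrightarrow{\ \sim\ }\pi_i(\Gamma_{(\zeta;1,\dots,1)})$. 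This finishes the proof; the only care needed is the edge case $i=3$, where $\pi_2(K)=0$ already guarantees $\xi$ is injective, and $\pi_3(K)=0$ guarantees surjectivity, so no correction term survives.
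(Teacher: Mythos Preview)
Your overall approach matches the paper's: use the long exact homotopy sequence of the fibration $K\to S\mathcal O(\zeta+2\nu)\to\Gamma$, identify $\pi_2(\Gamma)$ with $\ker\psi$, and compute. Parts (a) and (c) are fine and essentially identical to the paper's argument.

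Part (b), however, contains a genuine error. You write that ``$\psi$ kills the free part (its target is torsion, being $\mathbb Z/2$)'' and conclude that ``$\ker\psi$ contains the full free part $\mathbb Z^a$''. This is false: a homomorphism from a free abelian group to $\mathbb Z/2$ need not vanish. In fact the explicit loops $\alpha_j(t)=I_\zeta\oplus I\oplus(e^{2\pi it})\oplus I$ generating the free summands $\pi_1(U(\nu_j))\cong\mathbb Z$ each map under $\psi$ to the \emph{nontrivial} element of $\pi_1(S\mathcal O(\zeta+2\nu))$---that is exactly the computation you used in part (a) to show $\psi$ is surjective. So none of the standard free generators lies in $\ker\psi$; the kernel is an index-$2$ subgroup, not the full $\mathbb Z^a$ plus a torsion piece.

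The correct argument, which the paper carries out case by case in $\zeta$, is: (i) $\ker\psi$ has rank exactly $a$ because the $2[\alpha_j]$ (and $2[\omega]$ when $\zeta=2$) are $\mathbb Z$-independent elements of the kernel while $\mathrm{rank}(\ker\psi)\le\mathrm{rank}(\pi_1(K))=a$; and (ii) $\ker\psi$ is torsion-free, which for $\zeta\le 2$ is automatic since $\pi_1(K)$ itself is free, and for $\zeta\ge 3$ requires checking that the unique nontrivial torsion element $[\omega]\in\pi_1(S\mathcal O(\zeta))$ is \emph{not} in $\ker\psi$ (it also maps to the generator of $\mathbb Z/2$). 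From (i) and (ii) together one gets $\ker\psi\cong\mathbb Z^a$. Your parenthetical about the $\mathbb Z/2$-part handles (ii), but you still need (i) in place of the incorrect containment claim.
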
 

\begin{proof}
The assumptions about $\zeta$ and $s$ imply that $\zeta+2\nu \geq3$; then $\pi_1 (S\mathcal{O}(\zeta+2\nu))$ is a cyclic group of order two.
Furthermore, since $\pi_2 (S\mathcal{O}(\zeta+2\nu))= \lbrace0\rbrace$, the final part of the exact homotopy sequence reduces to \ \ 
$0 \to \pi_2 (\Gamma_{(\zeta; \nu_1, \cdots, \nu_s) }) \overset{\delta} {\longrightarrow} \\
\pi_1 \big(S\mathcal{O}(\zeta)\oplus(\bigoplus\limits_{h=1}^s U(\nu_{_h}))\big) \overset{\psi}{\longrightarrow}\pi_1 (S\mathcal{O}(\zeta+2\nu)) \overset {\xi}{\longrightarrow}
\pi_1 (\Gamma_{(\zeta; \nu_1, \cdots, \nu_s) }) \to 0$. In this sequence the homomorphism $\psi$ is induced by the inclusion determined by the decomplexification mapping. 
Now we set $\phi_1= 0$,  $\phi_j= \sum\limits_{r=1}^{j-1} \nu_r$, for $j=2,\cdots, s$, and we define, for $j=1, \cdots, s$, the following loops:

$\alpha_j: t \mapsto I_{_{\zeta}} \oplus I_{_{\phi_j}}\oplus (e^{2 \pi t {\bf i}})\oplus I_{_{(\nu- \phi_j -1)}} \in S\mathcal{O}(\zeta)\oplus\big(\bigoplus\limits_{h=1}^s U(\nu_{_h})\big)$,

$\beta_j: t \mapsto I_{_{(2\phi_j+\zeta)}}\oplus \left( \begin{smallmatrix} 
\cos(2 \pi t) & \ -\sin(2 \pi t) \\ 
\sin(2 \pi t)&  \ \ \ \cos(2 \pi t)
\end{smallmatrix} \right)\oplus I_{_{2(\nu- \phi_j -1)}} \in S\mathcal{O}(\zeta+2\nu)$,
for every $t \in [0,1]$. Hence, denoted by $[\alpha_j]$ and $[\beta_j]$ the equivalence classes of the loops $\alpha_j$ and $\beta_j$ in $\pi_1\big(S\mathcal{O}(\zeta)\oplus(\bigoplus\limits_{h=1}^s U(\nu_{_h}))\big)$ and $\pi_1 (S\mathcal{O}(\zeta+2\nu))$, respectively, we have $\psi ([\alpha_j])= [\beta_j]$, for every $j=1, \cdots, s$. The mapping $\psi$ is surjective, since $[\beta_1]$ is the generator (of order two) of $\pi_1 (S\mathcal{O}(\zeta+2\nu))$; so, by the exactness of the previous sequence, $\pi_1 (\Gamma_{(\zeta; \nu_1, \cdots,\nu_s) })$ is the trivial group and (a) is proved.

Moreover, all the loops $\beta_j$ are homotopic to the loop $\beta_1$. Indeed, if $Q_j$ is a (special orthogonal) permutation matrix such that $Q_j \beta_{1}(t) Q_j ^T =  \beta_{j}(t)$ (for every $t \in [0,1]$) and $ \gamma: [0,1] \to S\mathcal{O}(\zeta+2\nu)$ is a continuous path joining $I_{_{(\zeta+2\nu)}}$ and $Q_j$, then the mapping $H$ defined by $H(t,s)= \gamma (s) \beta_{1} (t)  \gamma (s)^T$ (with $t,s \in [0,1]$), is a homotopy between the loops $\beta_{1}$ and $\beta_{j}$.Then $\psi ([\alpha_j])= [\beta_j]= [\beta_1]$, for $j=1, \cdots, s$. 

If $\zeta=0, 1$, the fundamental group $\pi_1 \big(S\mathcal{O}(\zeta)\oplus(\bigoplus\limits_{h=1}^s U(\nu_{_h}))\big)$ is a free abelian group of rank $s$ and its generators are the homotopy classes of the loops 
$\alpha_j$ for $j=1, \cdots, s$, so we have  $\psi(\sum\limits_{j=1}^s n_j [\alpha_j])= (\sum\limits_{j=1}^s n_j)[\beta_1]$, for every $n_1,\cdots,n_s \in \mathbb{Z}$. 

Furthermore $ \ker \psi = \lbrace \sum\limits_{j=1}^s n_j [\alpha_j]: \sum\limits_{j=1}^s n_j$ is even$\rbrace$ is a free abelian group, whose rank is less than or equal to $s=$ rank$\big(\pi_1 (S\mathcal{O}(\zeta)\oplus(\bigoplus\limits_{h=1}^s U(\nu_{_h})))\big)$. Since the elements $2[\alpha_1], \cdots, 2[\alpha_s] \in \ker \psi$ are linearly independent over $\mathbb{Z}$, it follows that rank$(\ker \psi) = s$ and then $\pi_2 (\Gamma_{(\zeta; \nu_1, \cdots,\nu_s) })\cong \ker \psi\cong \mathbb{Z}^s$.

If $\zeta \geq 2$, we denote now by 

$\omega: [0,1] \to S\mathcal{O}(\zeta)\oplus \big(\bigoplus\limits_{h=1}^s U(\nu_{_h})\big)$ and 
$\tilde{\omega}: [0,1] \to S\mathcal{O}(\zeta+2\nu)$ the loops defined, respectively, by 

$\omega (t)= \left( \begin{smallmatrix} 
\cos(2 \pi t) & \ -\sin(2 \pi t) \\ 
\sin(2 \pi t)&  \ \ \ \cos(2 \pi t)
\end{smallmatrix} \right) \oplus I_{_{(\zeta-2)}} \oplus I_{_\nu}$ and  $\tilde{\omega} (t)= \left( \begin{smallmatrix} 
\cos(2 \pi t) & \ -\sin(2 \pi t) \\ 
\sin(2 \pi t)&  \ \ \ \cos(2 \pi t)
\end{smallmatrix} \right) \oplus I_{_{(2\nu+\zeta -2)}}$, for every $t \in [0,1]$, so that $\psi([\omega])= [\tilde{\omega}]$. As before, it can be proved that 

$[\tilde{\omega}]= [\beta_1]$. 
Furthermore the elements $[\omega], [\alpha_1], \cdots, [\alpha_s]$ are independent generators of 
$\pi_1 \big(S\mathcal{O}(\zeta)\oplus (\bigoplus\limits_{h=1}^s U(\nu_{_h}))\big)$.

If $\zeta=2$, all these elements have infinite order and
$\pi_1 \big(S\mathcal{O}(\zeta)\oplus (\bigoplus\limits_{h=1}^s U(\nu_{_h}))\big)$ is a free abelian group of rank $s+1$. 

Then $\ker\psi=\lbrace n_0[\omega]+\sum\limits_{j=1}^s n_j [\alpha_j]: \sum\limits_{j=0}^s n_j $ is even$\rbrace$ is a free abelian group of rank $\leq s+1$; 
since $2[\omega], 2[\alpha_1], \cdots, 2[\alpha_s]$ are  $\mathbb{Z}$-linearly independent elements of $\ker \psi$, we conclude that rank$(\ker \psi) = s+1$ and then
$\pi_2 (\Gamma_{(2; \nu_1, \cdots, \nu_s) })\cong \ker \psi\cong \mathbb{Z}^{s+1}$. 

If $\zeta \geq 3$, we have $\pi_1 \big(S\mathcal{O}(\zeta)\oplus (\bigoplus\limits_{h=1}^s U(\nu_{_h}))\big)\cong \mathbb{Z}_2 \oplus \mathbb{Z}^s$ and hence 

rank \big($\pi_1 (S\mathcal{O}(\zeta)\oplus (\bigoplus\limits_{h=1}^s U(\nu_{_h})))$\big) $=s$. As before, $2[\alpha_1], \cdots, 2[\alpha_s]$ are $\mathbb{Z}$-linearly independent elements of $\ker \psi \subset\pi_1 \big(S\mathcal{O}(\zeta)\oplus (\bigoplus\limits_{h=1}^s U(\nu_{_h}))\big)$. Hence $s \leq $ rank($\ker \psi$) $\leq$ rank\big($\pi_1 (S\mathcal{O}(\zeta)\oplus (\bigoplus\limits_{h=1}^s U(\nu_{_h})))$\big) $=s$, so that rank($\ker \psi$) $=s$. Note that $\ker \psi$ is a torsion-free finitely generated abelian group. Indeed $[\omega]$ is the unique non-trivial torsion element of the group $\pi_1 \big(S\mathcal{O}(\zeta)\oplus (\bigoplus\limits_{h=1}^s U(\nu_{_h}))\big)$ and $[\omega] \notin \ker \psi$. Hence we obtain that $\ker \psi$ is a free abelian group of rank $s$, so that 

$\pi_2 (\Gamma_{(\zeta; \nu_1, \cdots, \nu_s) }) \cong \ker \psi \cong \mathbb{Z}^s$. Then (b) is completely proved.

Finally, if $\nu_j =1$, for every $j=1, \cdots, s$ and $\zeta=0, 1, 2$, since $\pi_r (U(1)) = \pi_r (S\mathcal{O}(1)) =\pi_r (S\mathcal{O}(2)) = \lbrace0\rbrace$ for every $r \geq 2$, we get the following exact sequence:

$0 \to \pi_i (S\mathcal{O}(\zeta+2\nu)) \overset {\xi}{\longrightarrow}\pi_i (\Gamma_{(\zeta; 1, \cdots, 1)}) \to 0$, for every $i \geq 3$, and so (c) holds.
\end{proof}

\begin{prop}\label{Theta-omotopia}
Let  $\nu_1, \cdots, \nu_s $ be integers such that $\nu_1, \cdots, \nu_s \geq 1$ \ ($s \geq 1$) and set
\ \ $\nu= \sum\limits_{j=1}^s \nu_j$. Then

a)  \ \  $\Theta_{(\nu_1, \cdots,\nu_s)}$ is simply connected;

b)  \ \  $\pi_2 (\Theta_{(\nu_1, \cdots, \nu_s)})$ is a free abelian group of rank $s-1$\ ;

c)  \ \  if $\nu_1 = \cdots = \nu_s =1$, then $\pi_i (\Theta_{(1, \cdots, 1) })$ is isomorphic to $\pi_i (U(s))$, for every $i \geq 3$. In particular, \ if $s \geq 2$ \ then
$\pi_3 (\Theta_{(1, \cdots, 1) })$ is isomorphic to $\mathbb{Z}$.
\end{prop}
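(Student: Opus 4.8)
The plan is to feed the fibration with total space $U(\nu)$, fiber $\bigoplus_{j=1}^s U(\nu_j)$ and base $\Theta_{(\nu_1, \cdots, \nu_s)}$ into the exact homotopy sequence of Remark \ref{Omotopia}, mimicking the proof of Proposition \ref{Gamma-omotopia}. First I would recall the two standard facts: $\pi_1(U(m)) \cong \mathbb{Z}$, a generator being the class of the loop $t \mapsto (e^{2 \pi t {\bf i}}) \oplus I_{_{m-1}}$, and $\pi_2(U(m)) = \lbrace 0 \rbrace$ (as for every Lie group); moreover, the determinant homomorphism $\det: U(m) \to U(1)$ induces an isomorphism on $\pi_1$. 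Putting $\phi_1 = 0$ and $\phi_j = \sum_{r=1}^{j-1} \nu_r$ for $j = 2, \cdots, s$, I would introduce the loops
\[
\alpha_j: t \mapsto I_{_{\phi_j}} \oplus (e^{2 \pi t {\bf i}}) \oplus I_{_{(\nu - \phi_j - 1)}} \in \bigoplus_{h=1}^s U(\nu_{_h}), \qquad t \in [0,1],
\]
whose classes $[\alpha_1], \cdots, [\alpha_s]$ form a free basis of $\pi_1\big(\bigoplus_{h=1}^s U(\nu_{_h})\big) \cong \mathbb{Z}^s$.

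The key step is to determine $\psi$ on $\pi_1$, where $\psi$ is induced by the inclusion $\bigoplus_h U(\nu_h) \hookrightarrow U(\nu)$. Composing with $\det$, the loop $\psi(\alpha_j)$ is carried to $t \mapsto e^{2 \pi t {\bf i}}$, a generator of $\pi_1(U(1))$, independently of $j$; since the induced map of $\det$ is an isomorphism on $\pi_1(U(\nu))$, each $\psi([\alpha_j])$ equals one and the same generator $g$ of $\pi_1(U(\nu)) \cong \mathbb{Z}$, so that $\psi\big(\sum_{j=1}^s n_j [\alpha_j]\big) = \big(\sum_{j=1}^s n_j\big) g$ for all $n_1, \cdots, n_s \in \mathbb{Z}$; in particular $\psi$ is surjective. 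Since $\pi_2(U(\nu)) = \lbrace 0 \rbrace$, the tail of the exact sequence reads $0 \to \pi_2(\Theta_{(\nu_1, \cdots, \nu_s)}) \overset{\delta}{\longrightarrow} \pi_1\big(\bigoplus_{h=1}^s U(\nu_{_h})\big) \overset{\psi}{\longrightarrow} \pi_1(U(\nu)) \overset{\xi}{\longrightarrow} \pi_1(\Theta_{(\nu_1, \cdots, \nu_s)}) \to 0$, whence $\pi_1(\Theta_{(\nu_1, \cdots, \nu_s)}) = \lbrace 0 \rbrace$ --- this is (a) --- and $\pi_2(\Theta_{(\nu_1, \cdots, \nu_s)}) \cong \ker \psi = \lbrace \sum_{j=1}^s n_j [\alpha_j] : \sum_{j=1}^s n_j = 0 \rbrace$. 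The latter, being a subgroup of $\mathbb{Z}^s$, is free abelian, and being the kernel of the surjective sum homomorphism $\mathbb{Z}^s \to \mathbb{Z}$ it has rank $s-1$ (a basis being $[\alpha_j] - [\alpha_{j+1}]$, $j = 1, \cdots, s-1$); this proves (b).

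For (c) I would specialize to $\nu_1 = \cdots = \nu_s = 1$, so that $\nu = s$ and the fiber becomes $U(1)^{\oplus s}$. Because $\pi_r(U(1)) = \pi_r(S^1) = \lbrace 0 \rbrace$ for every $r \geq 2$, for each $i \geq 3$ the exact sequence degenerates to $0 \to \pi_i(U(s)) \overset{\xi}{\longrightarrow} \pi_i(\Theta_{(1, \cdots, 1)}) \to 0$, i.e. $\pi_i(\Theta_{(1, \cdots, 1)}) \cong \pi_i(U(s))$. Finally, for $s \geq 2$ one has $\pi_3(U(s)) \cong \pi_3(SU(s)) \cong \mathbb{Z}$ (this is classical, and is also the stable value $\pi_3(U) \cong \mathbb{Z}$ computed in \cite{Bott1959}), so $\pi_3(\Theta_{(1, \cdots, 1)}) \cong \mathbb{Z}$.

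I expect the main obstacle to be precisely the computation of $\psi$ on $\pi_1$: one must be sure that every torus generator $[\alpha_j]$ of the fiber is sent to a \emph{generator} of $\pi_1(U(\nu))$ --- and to the same one for all $j$ --- rather than to a proper multiple. The determinant argument above settles this cleanly; alternatively, as with the loops $\beta_j$ in the proof of Proposition \ref{Gamma-omotopia}, one can conjugate $\psi(\alpha_1)$ by suitable permutation matrices (which lie in the connected group $U(\nu)$, hence are joined to $I_{_\nu}$ by a path) to build a free homotopy in $U(\nu)$ between $\psi(\alpha_j)$ and the standard generating loop $t \mapsto (e^{2 \pi t {\bf i}}) \oplus I_{_{\nu-1}}$, and then invoke that $\pi_1(U(\nu))$ is abelian.
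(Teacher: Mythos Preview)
Your proof is correct and follows the same overall strategy as the paper: feed the fibration $\bigoplus_j U(\nu_j) \hookrightarrow U(\nu) \to \Theta_{(\nu_1,\ldots,\nu_s)}$ into the long exact homotopy sequence, use $\pi_2(U(\nu))=0$, and read off the results. Parts (a) and (c) match the paper essentially verbatim.

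The only noteworthy difference is in part (b). The paper, having obtained the short exact sequence
\[
0 \to \pi_2(\Theta_{(\nu_1,\ldots,\nu_s)}) \to \pi_1\Big(\bigoplus_j U(\nu_j)\Big)\cong\mathbb{Z}^s \overset{\psi}{\to} \pi_1(U(\nu))\cong\mathbb{Z} \to 0,
\]
simply observes that it splits (the quotient $\mathbb{Z}$ being free) and concludes $\pi_2(\Theta)\cong\mathbb{Z}^{s-1}$ without ever identifying $\psi$. You instead compute $\psi$ explicitly via the determinant, pin down $\ker\psi$ as the zero-sum sublattice of $\mathbb{Z}^s$, and exhibit a basis. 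Your route is more concrete and gives a little more information (an actual basis of $\pi_2$); the paper's is shorter since the splitting argument makes the explicit form of $\psi$ unnecessary. Either way the argument is the same in spirit.
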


\begin{proof}
Taking into account that $\pi_2(U(\nu))= \lbrace0\rbrace$ and arguing as in Proposition \ref{Gamma-omotopia}, we obtain (a); consequently we have the following short exact sequence:

$0 \to \pi_2 (\Theta_{(\nu_1, \cdots,\nu_s)}) \overset{\delta} {\longrightarrow}
\pi_1 (\bigoplus\limits_{j=1}^s U(\nu_j)) \overset{\psi}{\longrightarrow}\pi_1 (U(\nu)) \to 0$.

The group $\pi_2 (\Theta_{(\nu_1, \cdots,\nu_s) })$ is free abelian, since it is a subgroup of the free abelian group $\pi_1 (\bigoplus\limits_{j=1}^s U(\nu_j))\cong \mathbb{Z}^s$. Furthermore, the previous short exact sequence splits, because $\pi_1 (U(\nu))\cong \mathbb{Z}$ is a free abelian group. So, we can conclude that 

$\pi_2 (\Theta_{(\nu_1, \cdots,\nu_s)})\cong \mathbb{Z}^{s-1}$ and then (b) holds.

Since $\pi_r (U(1))=\lbrace0\rbrace$, for every $r \geq 2$, the exactness of the sequence 

$0 \to \pi_i (U(s)) \overset {\xi}{\longrightarrow}\pi_i (\Theta_{(1, \cdots, 1)}) \to 0$ implies that $\pi_i (\Theta_{(1, \cdots, 1)})\cong\pi_i (U(s))$, if $i \geq 3$. The last claim of (c) follows from the well-known fact that $\pi_3 (U(s))\cong \mathbb{Z}$, for every $s \geq 2$. 

\end{proof}

\begin{rem}\label{Pi2-con delta}

Using the Kronecker delta and taking into account Proposition \ref{omotopie-isomorfe}, it is possible to summarize Remark \ref{Quozienti} and Propositions, \ref{Bott}, \ref{Gamma-omotopia} (b), \ref{Theta-omotopia} (b), by saying that $\pi_2(\widehat{\Gamma}_{(\zeta; \nu_1, \cdots, \nu_s)})$ and $\pi_2(\Gamma_{(\zeta; \nu_1, \cdots, \nu_s)})$ are free abelian groups of rank 

$s-\delta_{(\zeta,0)}\delta_{(s,1)}\delta_{(\nu_{_1},1)} + \delta_{(\zeta,2)}(1-\delta_{(s,0)})$ \ (for $\zeta, s \geq 0$, $\zeta + s \geq 1$), while $\pi_2(\widehat{\Theta}_{(\nu_1, \cdots, \nu_s)})$ and $\pi_2(\Theta_{(\nu_1, \cdots, \nu_s)})$ are free abelian groups of rank $s-1$ , for every $s \geq 1$\ . Furthermore all these homogeneous spaces are simply connected.

\end{rem}
\section{Real logarithms of semi-simple matrices}\label{LOG}

Given a real square matrix $M$ of order $n$, we denote by $\mathcal{L}og(M)$ the set of all real logarithms of $M$, namely
$$\mathcal{L}og(M)=\lbrace Y \in M(n,\mathbb{R}) : exp(Y)=M \rbrace .$$
Let $M$ be semi-simple; in order for $\mathcal{L}og(M)$ not to be empty, by Remark \ref{esist-rad-quadr}, we must assume that the matrix $M$ is non-singular, with all (possible) negative eigenvalues having even multiplicity. 

Aim of this section is to study $\mathcal{L}og(M)$, for any $M$ satisfying these assumptions.

\begin{rem}\label{semisemplici}
Let $A \in M(n,\mathbb{R})$. Then $A$ is semi-simple if and only if $exp(A)$ is semi-simple.

\smallskip

One implication is trivial.
For the other, assume that $exp(A)$ is semi-simple. By the additive Jordan-Chevalley decomposition, there are a semi-simple matrix $S$ and a nilpotent matrix $N$ of index $k \geq 1$, such that $A= S+N$ with $SN=NS$ (see for instance \cite[\S\,4.2]{Humph1975} and also \cite{DoPe2017}). 
Since $N$ and $S$ commute, we have  $exp(A)=exp(S)exp(N)$ with $exp(S)$ semi-simple and $exp(N)$ unipotent, so, from the uniqueness of the multiplicative Jordan-Chevalley decomposition, we get $exp(A)=exp(S)$ and $exp(N)=I_n$. If $k \geq2$, from the latter equation we get $\sum\limits_{i=1}^{k-1} \dfrac{N^i}{i!}=0$ and this is impossible, since the degree of the minimal polynomial of $N$ is $k$; \ so, we necessarily have  $k=1$, $N=0$ and hence $A$ is semi-simple.
\end{rem}

\begin{remsdefs}\label{Jordan-form}

a) Let $M \in GL ^+(n,\mathbb{R})$ be a semi-simple matrix, whose every (possible) negative eigenvalue has even multiplicity, and denote its eigenvalues in the following way: 

\smallskip

- the distinct positive eigenvalues are: $\lambda_{_1} < \lambda_{_2} < \cdots < \lambda_{_p}$,\ with (positive) multiplicities \ $h_{_1}, h_{_2}, \cdots , h_{_p}$\ , respectively \ ($p \geq 0$);

\smallskip

- the distinct non-real eigenvalues are: $\rho_{_{(1,1)}}\exp(\pm {\bf i} \theta_{_1}), \cdots , \rho_{_{(1,a_{_1})}}\exp(\pm {\bf i} \theta_{_1})$, 

$\rho_{_{(2,1)}}\exp(\pm {\bf i} \theta_{_2}), \cdots , \rho_{_{(2,a_{_2})}}\exp(\pm {\bf i} \theta_{_2})$ \ up to \ 
$\rho_{_{(r,1)}}\exp(\pm {\bf i} \theta_{_r}), \cdots , \rho_{_{(r,a_{_r})}}\exp(\pm {\bf i} \theta_{_r})$, 

where $\rho_{_{(l,t)}} \exp(\pm {\bf i} \theta_{_l})$ have both (positive) multiplicity $m_{_{(l,t)}}$, for every $l,t$ , and  where $0 < \theta_{_1} < \theta_{_2} < \cdots < \theta_{_r} < \pi$ ,\ \ \  $a_{_l}\geq1$ ,\ \ \ $0 < \rho_{_{(l,1)}} < \rho_{_{(l,2)}} < \cdots  < \rho_{_{(l,a_{_l})}},$ for every $l = 1, \cdots , r$ \ \ ($r \ge 0$);

\smallskip

- the distinct negative eigenvalues are: $-w_{_1} > -w_{_2} > \cdots > -w_{_q}$,\  with (even positive) multiplicities \ $2 k_{_1} , 2 k_{_2}, \cdots , 2k_{_q}$\ ,\ respectively ($q \ge 0$)).

\smallskip

Note that\ \ \ \ \  $\sum\limits_{i=1}^p h_{_i} + 2 \sum\limits_{l=1}^r \ \sum\limits_{t=1}^{a_{_l}} m_{_{(l,t)}} + 2\sum\limits_{j=1}^q k_{_j} =n$.

We also denote by $2A=2\sum\limits_{l=1}^r a_{_l}$ \ the number of distinct non-real eigenvalues of $M$.
\smallskip

We have assumed that one or two of the indices $p, r, q$ can be zero. For example, the index $p$ vanishes when the matrix $M$ has no positive eigenvalues. In this case, the numbers $\lambda_{_i}, h_{_i}$ are not defined and it is understood that the term $\sum\limits_{i=1}^p h_{_i}$\ , or any other term of the same type, does not appear in the previous or similar equalities, in accordance with Notations \ref{notazioni} (b). Analogous remarks hold when $r$ or $q$ are zero. 
\smallskip

b) Let $M$ be as in (a) and denote by $Y \in M(n,\mathbb{R})$ a real logarithm of $M$. By Remark \ref{semisemplici}, $Y$ is semi-simple and its eigenvalues are (complex) logarithms of the eigenvalues of $M$. Hence there exist two finite sets, $ \lbrace \eta_{_{(i,x)}}, \tau_{_{(l,t,z)}}, \sigma_{_{(j,y)}} \rbrace \subset \mathbb{Z} $ and  $\lbrace u_{_{(i,x)}}, b_{_i}, \mu_{_{(l,t,z)}}, d_{_{(l,t)}}, v_{_{(j,y)}}, c_{_j} \rbrace \subset \mathbb{N}$, such that the distinct eigenvalues of $Y$ are precisely the following:

\smallskip

- $\ln(\lambda_{_i})\pm  2\pi\eta_{_{(i,x)}}{\bf i}$ \ , \ for  $x=0,1, \cdots ,b_{_i}$\ , \ where we can assume  

$0=\eta_{_{(i,0)}} < \eta_{_{(i,1)}} <\cdots<\eta_{_{(i,b_{_i})}},$ and where, if $b_{_i} \geq 1$ and $x=1, \cdots ,b_{_i}$, then the eigenvalues $\ln(\lambda_{_i})\pm  2\pi\eta_{_{(i,x)}}{\bf i}$  have both multiplicity $u_{_{(i,x)}} \ge 1$, \  while,\ for $x=0$,\ the multiplicity of $\ln(\lambda_{_i})$ \ is \ 
$g_{_i}:= h_{_i} - 2\sum\limits_{x=1}^{b_{_i}} u_{_{(i,x)}} \geq 0$,\ if $b_{_i} \geq 1$, \ and \ $g_{_i}:= h_{_i}$, if\ $b_{_i}=0$\ , for every\ $i=1, \cdots ,p$ ;

\smallskip

- $\ln(\rho_{_{(l,t)}}) \pm  (\theta_{_l}+ 2\pi \tau_{_{(l,t,z)}}){\bf i}$ ,\ \ both with multiplicity $\mu_{_{(l,t,z)}} \geq 1$,, \ for $z=1, \cdots, d_{_{(l,t)}}$, where $\tau_{_{(l,t,1)}} <\cdots< \tau_{_{(l,t,d_{_{(l,t)}})}}$ and $\sum\limits_{z=1}^{\ \ d_{_{(l,t)}}} \mu_{_{(l,t,z)}}= m_{_{(l,t)}}$\ , for every \ $t=1,\cdots,a_{_l}$ \ and  \ $l=1, \cdots, r$  
\  ;

\smallskip 

- $ln (w_{_j}) \pm(\pi + 2\pi \sigma_{_{(j,y)}}) {\bf i}$ ,\ \ both with multiplicity $v_{_{(j,y)}} \geq 1$,   , \ for $y = 1 , \cdots , c_{_j}$, \ where 
$v_{_{(j,1)}}< \cdots < v_{_{(j,c_{_j})}}$ \ and \ $\sum\limits_{y=1}^{c_{_j}} v_{_{(j,y)}}=k_{_j}$\ ,\ for every\ $j=1, \cdots ,q$\ .

\smallskip

c) Let $M,\ Y$ (and their eigenvalues) be as in (a) and (b), respectively. 

In order to simplify notations and statements, we define the following sets: 

$I:=\lbrace i:\ 1 \leq i \leq p\ , \ \ b_{_i}\geq1\rbrace , \ \ \ \ \ \ \ \ \  \widehat{I}:=\ \lbrace i: 1 \leq i \leq p \ , \ \ b_{_i}=0 \rbrace , \\ J := \lbrace i \in I:  \ \ g_{_i} = 0\rbrace = \lbrace i:\ 1 \leq i \leq p  \ ,\ \ g_{_i} = 0\rbrace , \ \ \ \ \ \ \ \widehat{J} := \lbrace i \in I: \ \ g_{_i} = 2\rbrace , \\ K= \lbrace i \in I:\ g_{_i}=0\ , \ \ b_{_i}=u_{_{(i,1)}}=1 \rbrace , \ \ L=\lbrace j:\ 1 \leq j \leq q\ , \ \ c_{_j}=v_{_{(j,1)}}=1\rbrace,$ 

and the following multi-indices:

$\eta:=(0\ ,\eta_{_{(1,1)}}, \cdots ,\eta_{_{(1,b_{_1})}}; \ \cdots \ ; 0\ , \eta_{_{(p,1)}}, \cdots, \eta_{_{(p,b_{_p})}});\\ 
u=:(g_{_1}, u_{_{(1,1)}}, \cdots , u_{_{(1,b_{_1})}}; \  \cdots \ ; g_{_p}, u_{_{(p,1)}}, \cdots , u_{_{(p,b_{_p})}});\\ $
$\tau:=(\tau_{_{(1,1,1)}},\cdots, \tau_{_{(1,1,d_{_{(1,1)}})}}; \ \cdots \ ; \tau_{_{(r,a_{_r},1)}}, \cdots , \tau_{_{(r,a_{_r},d_{_{(r,a_{_r})}})}});$

$\mu:=(\mu_{_{(1,1,1)}},\cdots, \mu_{_{(1,1,d_{_{(1,1)}})}}; \ \cdots \ ;\mu_{_{(r,a_{_r},1)}}, \cdots ,\mu_{_{(r,a_{_r},d_{_{(r,a_{_r})}})}});\\
\sigma:=(\sigma_{_{(1,1)}}, \cdots , \sigma_{_{(1,c_{_1})}}; \ \cdots \ ; \sigma_{_{(q,1)}}, \cdots \ ,\sigma_{_{(q,c_{_q})}}); \\ v:=(v_{_{(1,1)}}, \cdots , v_{_{(1,c_{_1})}}; \ \cdots \ ; v_{_{(q,1)}}, \cdots \ ,v_{_{(q,c_{_q})}})$.

If the set of multi-indices $(\eta, u, \tau, \mu, \sigma, v) $ satisfies the conditions stated in (b), we  say that it is \emph{admissible with respect to the matrix $M$} or simply \emph{M-admissible}.

Note that some multi-indices between $\eta, u, \tau, \mu, \sigma, v$ are necessarily empty when $p$, $r$ or $q$ vanish. For instance, if $p=0$, then $\eta= u = \emptyset ,$ and something like this when the integer r (or the integer q) is zero. 

Note also that there is always a countable infinity of $M$-admissible sets of multi-indices, unless the eigenvalues of $M$ are all real, positive and simple, in which case there exists a single $M$-admissible set of multi-indices corresponding to the values $\tau= \mu= \sigma= v = \emptyset, \ \ \eta=(0, 0, \cdots, 0) ,\ \ u=(h_{_1}, h_{_2}, \cdots, h_{_p})\ .$

We denote by 
$\mathcal{L}(M)_{(\eta, \tau, \sigma)}^{(u, \mu , v)}$ the subset of $\mathcal{L}og(M)$ of all real logarithms of $M$ whose eigenvalues agree with the eigenvalues of the matrix $Y$ (with the same multiplicities). We say that the eigenvalues of $Y$ (each with its own multiplicity) are the \emph{eigenvalues (with corresponding multiplicity) of $\mathcal{L}(M)_{(\eta, \tau, \sigma)}^{(u, \mu , v)}$}. 
Note also that, unless the eigenvalues of $M$ are all real, positive and simple, we have
$$\mathcal{L}og(M) = \bigsqcup \ \mathcal{L}(M)_{(u, \mu , v)}^{(\eta, \tau, \sigma)}\ \ ,$$ where the countable disjoint union is taken on all $M$-admissible sets of multi-indices $(\eta, u, \tau, \mu, \sigma, v)$, while, if the eigenvalues of $M$ are all real, positive and simple, the set $\mathcal{L}og(M)$ agrees with $\mathcal{L}(M)_{(u, \mu , v)}^{(\eta, \tau, \sigma)}$, where $\tau= \mu= \sigma= v = \emptyset, \ \ 
\eta=(0, 0, \cdots, 0) ,$ $\ \ u=(h_{_1}, h_{_2}, \cdots, h_{_p})$ \ .
It is not difficult to prove that each $\mathcal{L}(M)_{(\eta, \tau, \sigma)}^{(u, \mu , v)}$ is an open and closed topological subspace of $\mathcal{L}og(M)$ and, consequently, every connected component of $\mathcal{L}(M)_{(\eta, \tau, \sigma)}^{(u, \mu , v)}$ is also a connected component of $\mathcal{L}og(M)$.
\smallskip

d) If the semi-simple matrices $M$ and $Y$ (and their eigenvalues) are as in (a) and (b), the real Jordan forms, $\mathcal{J}_M$ of $M$ and $\widetilde{\mathcal{J}}$ of $J$, can be written, respectively, as follows:

$(\star) \ \ \ \ \ \mathcal{J}_M:= \bigg[ \bigoplus\limits_{i \in I} \lambda_{_i} I_{_{h_{_i}}}\bigg] \oplus \bigg[ \bigoplus\limits_{i \in \widehat{I}} \lambda_{_i} I_{_{h_{_i}}}\bigg] \oplus \bigg[\bigoplus\limits_{l=1}^r \ \bigoplus\limits_{t=1}^{a_{_l}} \rho_{_{(l,t)}} E_{\theta_{_l}}^{\oplus m_{_{(l,t)}}}\bigg] \oplus \bigg[ \bigoplus\limits_{j=1}^{q}(- w_{_j}) I_{_{2 k_{_j}}} \bigg]
$;

$(\star \star) \ \ \ \ \ \widetilde{\mathcal{J}}:= \bigg[\bigoplus\limits_{i\in I} \ \bigg((ln(\lambda_{_i})I_{_{g_{_i}}}) \oplus \big(\bigoplus\limits_{x=1}^{b_{_i}}(\ln(\lambda_{_i})I_{_{2u_{_{(i,x)}}}}+(2\pi\eta_{_{(i,x)}})E^{\oplus u_{_{(i,x)}}})\big)\bigg)\bigg]\oplus $
\smallskip

$\ \ \ \bigg[\bigoplus\limits_{i\in \widehat{I}} \ \ln(\lambda_{_i})I_{_{h_{_i}}}\bigg] \oplus \bigg[\bigoplus\limits_{l=1}^r \ \bigoplus\limits_{t=1}^{a_{_l}} \ \bigoplus\limits_{z=1}^{\ \ d_{_{(l,t)}}}\bigg(ln(\rho_{_{(l,t)}})I_{_{2\mu_{_{(l,t,z)}}}} + (\theta_{_l}+ 2\pi \tau_{_{(l,t,z)}})E^{\oplus \mu_{_{(l,t,z)}}} \bigg)\bigg] \oplus$
\smallskip 

$\ \ \ \bigg[\bigoplus\limits_{j=1}^q \ \bigoplus\limits_{y=1}^{c_{_j}} \bigg(ln(w_{_j})I_{_{2v_{_{(j,y)}}}} + (\pi + 2\pi \sigma_{_{(j,y)}})E^{\oplus v_{_{(j,y)}}}\bigg)\bigg]$.
\smallskip

By \cite[Cor.\,3.4.1.10, p.\,203]{HoJ2013}, we know that there exist two matrices 

$C, T \in GL(n,\mathbb{R})$ such that
$M= C \mathcal{J}_M C^{-1}$ and
$
Y=T \widetilde{\mathcal{J}} T^{-1}.
$

Since $\widetilde{\mathcal{J}}$ is a real Jordan form common to all matrices of $\mathcal{L}(M)_{(u, \mu , v)}^{(\eta, \tau, \sigma)}$, we say that $\widetilde{\mathcal{J}}$ is a \emph{real Jordan form of $\mathcal{L}(M)_{(u, \mu , v)}^{(\eta, \tau, \sigma)}$}.

Taking into account Notations \ref{notazioni} $(\bigtriangleup)$, it is easy to check that we have
 
$exp(\widetilde{\mathcal{J}}) = \mathcal{J}_M$ . 
Note also that this equality implies that $\mathcal{C}_{\widetilde{\mathcal{J}}} \subseteq \mathcal{C}_{{\mathcal{J}}_M}$.

\end{remsdefs}

\begin{prop}\label{descr-SR(M)-indici}
Let $M \in GL^+(n,\mathbb{R})$ be a semi-simple matrix, whose eigenvalues are as in Remarks-Definitions \ref{Jordan-form} (a) and let $C \in GL(n,\mathbb{R})$ be such that \\$M= C \mathcal{J}_M C^{-1}$, where $\mathcal{J}_M$ is the matrix defined by equation $(\star)$. 
Fix any set of \\$M$-admissible multi-indices $(\eta, u, \tau, \mu, \sigma, v)$ as in Remarks-Definitions \ref{Jordan-form} (b),(c) and denote by $\widetilde{\mathcal{J}}$ the real Jordan form of \ $\mathcal{L}(M)_{(u, \mu , v)}^{(\eta, \tau, \sigma)}$ defined by $(\star \star)$. 

Then we have \ \ $\mathcal{L}(M)_{(u, \mu , v)}^{(\eta, \tau, \sigma)} = \{C X \widetilde{\mathcal{J}} X^{-1} C^{-1} : X \in \mathcal{C}_{\mathcal{J}_M}\} = \Lambda_{_C}(\mathcal{L}(J_M)_{(u, \mu , v)}^{(\eta, \tau, \sigma)})$.

\smallskip

Moreover, $\mathcal{L}(M)_{(u, \mu , v)}^{(\eta, \tau, \sigma)}$ is a closed embedded submanifold of $GL(n,\mathbb{R})$, diffeomorphic to the  homogeneous space $\dfrac{\mathcal{C}_{\mathcal{J}_M}}{\mathcal{C}_{\widetilde{{\mathcal{J}}}}}$.
\end{prop}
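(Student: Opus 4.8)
The plan is to reduce the general case to the case $M = \mathcal{J}_M$ by conjugation, and then to identify $\mathcal{L}(\mathcal{J}_M)_{(u,\mu,v)}^{(\eta,\tau,\sigma)}$ as an orbit of a smooth group action. First I would observe that $\Lambda_{_C} : GL(n,\mathbb{R}) \to GL(n,\mathbb{R})$ is a diffeomorphism carrying $\mathcal{L}og(\mathcal{J}_M)$ onto $\mathcal{L}og(M)$ (since $\exp(\Lambda_{_C}(Y)) = C \exp(Y) C^{-1}$) and respecting eigenvalues with multiplicities, hence carrying $\mathcal{L}(\mathcal{J}_M)_{(u,\mu,v)}^{(\eta,\tau,\sigma)}$ onto $\mathcal{L}(M)_{(u,\mu,v)}^{(\eta,\tau,\sigma)}$; this gives the rightmost equality and shows it suffices to treat $M = \mathcal{J}_M$. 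Next, any $Y \in \mathcal{L}(\mathcal{J}_M)_{(u,\mu,v)}^{(\eta,\tau,\sigma)}$ is semi-simple with the same eigenvalues and multiplicities as $\widetilde{\mathcal{J}}$ (by Remark \ref{semisemplici} and the definition of the set), hence is conjugate to $\widetilde{\mathcal{J}}$; one must check the conjugating matrix can be chosen in $\mathcal{C}_{\mathcal{J}_M}$. This is where the relation $\exp(\widetilde{\mathcal{J}}) = \mathcal{J}_M$ enters: if $Y = P \widetilde{\mathcal{J}} P^{-1}$, then $\exp(Y) = P \mathcal{J}_M P^{-1} = \mathcal{J}_M$, so $P \in \mathcal{C}_{\mathcal{J}_M}$. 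Conversely, for $X \in \mathcal{C}_{\mathcal{J}_M}$, the matrix $X \widetilde{\mathcal{J}} X^{-1}$ has the right eigenvalues and $\exp(X \widetilde{\mathcal{J}} X^{-1}) = X \mathcal{J}_M X^{-1} = \mathcal{J}_M$, so it lies in $\mathcal{L}(\mathcal{J}_M)_{(u,\mu,v)}^{(\eta,\tau,\sigma)}$. This establishes the first two equalities.

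For the manifold structure, I would set up the smooth action of the Lie group $\mathcal{C}_{\mathcal{J}_M}$ (a closed subgroup of $GL(n,\mathbb{R})$ by Remarks-Definitions \ref{rho}(b)) on $M(n,\mathbb{R})$ by conjugation, $X \cdot B = X B X^{-1}$. The computation above identifies $\mathcal{L}(\mathcal{J}_M)_{(u,\mu,v)}^{(\eta,\tau,\sigma)}$ with the orbit of $\widetilde{\mathcal{J}}$ under this action. The isotropy subgroup at $\widetilde{\mathcal{J}}$ is $\{X \in \mathcal{C}_{\mathcal{J}_M} : X \widetilde{\mathcal{J}} = \widetilde{\mathcal{J}} X\} = \mathcal{C}_{\mathcal{J}_M} \cap \mathcal{C}_{\widetilde{\mathcal{J}}}$, which equals $\mathcal{C}_{\widetilde{\mathcal{J}}}$ since $\mathcal{C}_{\widetilde{\mathcal{J}}} \subseteq \mathcal{C}_{\mathcal{J}_M}$ (the inclusion noted at the end of Remarks-Definitions \ref{Jordan-form}(d)). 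By Remark \ref{Popov}, the orbit is an immersed submanifold diffeomorphic to $\mathcal{C}_{\mathcal{J}_M}/\mathcal{C}_{\widetilde{\mathcal{J}}}$; conjugating back by $\Lambda_{_C}$ transports this structure to $\mathcal{L}(M)_{(u,\mu,v)}^{(\eta,\tau,\sigma)}$.

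It remains to upgrade ``immersed'' to ``closed embedded''. For this I would argue that $\mathcal{L}(M)_{(u,\mu,v)}^{(\eta,\tau,\sigma)}$ is a closed subset of $GL(n,\mathbb{R})$: it is the intersection of $\mathcal{L}og(M) = \exp^{-1}(M)$ (closed, as $\exp$ is continuous) with the set of matrices having a prescribed characteristic polynomial (closed, being defined by polynomial equations on the entries). Actually more care is needed, since prescribing the characteristic polynomial is not quite the same as prescribing eigenvalues with the listed multiplicities for a general matrix; but on the closed set $\mathcal{L}og(M)$ every matrix is semi-simple (Remark \ref{semisemplici}) with eigenvalues among the fixed finite list of logarithms of eigenvalues of $M$, and for semi-simple matrices the multiplicities are determined by the characteristic polynomial, so the intersection with a fixed characteristic polynomial does pin down the component. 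Hence $\mathcal{L}(M)_{(u,\mu,v)}^{(\eta,\tau,\sigma)}$ is closed in $GL(n,\mathbb{R})$; a locally closed immersed submanifold that is also closed is an embedded submanifold. \textbf{The main obstacle} I anticipate is precisely this last point — making rigorous that the eigenvalue-with-multiplicity condition cuts out a closed subset and that the orbit coincides with the whole prescribed-eigenvalue set rather than a union of several orbits; the semi-simplicity of all logarithms, together with the fact that two semi-simple matrices with equal characteristic polynomials are conjugate, is what makes a single orbit exhaust the set and keeps it closed.
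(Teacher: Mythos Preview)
Your proposal is correct and follows essentially the same approach as the paper's proof: reduce to $M=\mathcal{J}_M$ via $\Lambda_{_C}$, identify $\mathcal{L}(\mathcal{J}_M)_{(u,\mu,v)}^{(\eta,\tau,\sigma)}$ as the $\mathcal{C}_{\mathcal{J}_M}$-conjugation orbit of $\widetilde{\mathcal{J}}$ with isotropy $\mathcal{C}_{\widetilde{\mathcal{J}}}$, and then argue closedness via the fixed characteristic polynomial to upgrade the immersed orbit to an embedded one. One small caution: the sentence ``a locally closed immersed submanifold that is also closed is an embedded submanifold'' is false in general (a figure-eight in $\mathbb{R}^2$ is a closed immersed circle that is not embedded); what you need---and what the paper invokes via \cite{MonZip1955}---is the orbit-specific fact that a closed orbit of a smooth Lie group action is embedded, so phrase it that way.
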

\begin{proof}
If $Y \in \mathcal{L}(M)_{(u, \mu , v)}^{(\eta, \tau, \sigma)}$, then $Y =T \widetilde{\mathcal{J}} T^{-1}$ for some $T \in GL(n,\mathbb{R})$ and $exp(Y)=T exp(\widetilde{\mathcal{J}}) T^{-1} = T \mathcal{J}_M T^{-1} = M = C \mathcal{J}_M C^{-1} $. Hence $C^{-1}T \in \mathcal{C}_{{\mathcal{J}}_M}$ and, so, $T=CX$ for some $X \in \mathcal{C}_{{\mathcal{J}}_M}$. This gives one inclusion in the first equality of the statement. The reverse inclusion is a simple computation. The second equality of the statement follows directly from the definition of the mapping $\Lambda_{_C}$.
Since $\Lambda_{_C}$ is a diffeomorphism of $GL(n,\mathbb{R})$,  it suffices to prove that the set 
$\{ X \widetilde{\mathcal{J}} X^{-1}  : X \in \mathcal{C}_{\mathcal{J}_M}\} = \mathcal{L}(\mathcal{J}_M)_{(u, \mu , v)}^{(\eta, \tau, \sigma)}$ has the properties required for set $\mathcal{L}(M)_{(u, \mu , v)}^{(\eta, \tau, \sigma)}$.

Let us consider the left action by conjugation of $\mathcal{C}_{\mathcal{J}_M}$ on $GL(n,\mathbb{R})$. 
The set $\{ X \widetilde{\mathcal{J}} X^{-1}  : X \in \mathcal{C}_{\mathcal{J}_M}\}$ is the orbit of $\widetilde{\mathcal{J}}$. By Remark \ref{Popov}, this set is an immersed submanifold of $GL(n,\mathbb{R})$, diffeomorphic to the  homogeneous space $\dfrac{\mathcal{C}_{\mathcal{J}_M}}{\mathcal{C}_{\widetilde{{\mathcal{J}}}}}$, since $\mathcal{C}_{\widetilde{{\mathcal{J}}}}$ is the isotropy subgroup of the action. 

Finally $\mathcal{L}(\mathcal{J}_M)_{(u, \mu , v)}^{(\eta, \tau, \sigma)}$ is  closed in $GL(n,\mathbb{R})$. Indeed, if $\{Y_i\}_{_{i\in \mathbb{N}}}$ is a sequence in 

$\mathcal{L}(\mathcal{J}_M)_{(u, \mu , v)}^{(\eta, \tau, \sigma)}$, converging to $Y \in GL(n,\mathbb{R})$, then 
$exp(Y) =\mathcal{J}_M$ too, and the characteristic polynomial of $Y$ is the same characteristic polynomial of all $Y_i$'s (constant with respect to $i \in \mathbb{N}$). 
Hence 
$Y \in \mathcal{L}(\mathcal{J}_M)_{(u, \mu , v)}^{(\eta, \tau, \sigma)}$ and this last set is closed and, therefore, it is an embedded submanifold of $GL(n,\mathbb{R})$ (see for instance \cite[\S\,2.13  Theorem, p.\,65]{MonZip1955}).
\end{proof}

\begin{lemma}\label{lemma-su-J}

Let $\mathcal{J}_M$ and $\widetilde{\mathcal{J}}$ the matrices of Remarks-Definitions \ref{Jordan-form} (d) defined by $(\star)$ and $(\star \star)$, respectively. Then the Lie groups of non-singular matrices commuting with $\mathcal{J}_M$ and $\widetilde{\mathcal{J}}$ are the following:

$\mathcal{C}_{\mathcal{J}_M} = \bigg[\bigoplus\limits_{i \in I} GL(h_{_i},\mathbb{R})\bigg] \oplus \bigg[\bigoplus\limits_{i \in \widehat{I}} GL(h_{_i},\mathbb{R})\bigg] \oplus \bigg[\bigoplus\limits_{l=1}^r \ \bigoplus\limits_{t=1}^{a_{_l}} GL(m_{_{(l,t)}},\mathbb{C})\bigg] \oplus $
\smallskip

$ \ \ \ \ \ \bigg[\bigoplus\limits_{j=1}^q GL(2k_{_j},\mathbb{R})\bigg]$.
\smallskip

$\mathcal{C}_{\widetilde{\mathcal{J}}} = \bigg[\bigoplus\limits_{i\in I} \bigg(GL({g_{_i}},\mathbb{R})\oplus \big(\bigoplus\limits_{x=1}^{b_{_i}}GL({u_{_{(i,x)}}},\mathbb{C})\big)\bigg)\bigg]\oplus\bigg[\bigoplus\limits_{i\in \widehat{I}} GL({h_{_i}},\mathbb{R})\bigg] \oplus $
\smallskip

$\ \ \ \ \ \bigg[\bigoplus\limits_{l=1}^r \ \bigoplus\limits_{t=1}^{a_{_l}} \ \bigoplus\limits_{z=1}^{\ \ d_{_{(l,t)}}} GL({\mu_{_{(l,t,z)}}},\mathbb{C})\bigg] \oplus \bigg[\bigoplus\limits_{j=1}^q \ \bigoplus\limits_{y=1}^{c_{_j}} GL({v_{_{(j,y)}}},\mathbb{C})\bigg]$.
\end{lemma}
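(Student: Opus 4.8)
The plan is to compute $\mathcal{C}_{\mathcal{J}_M}$ and $\mathcal{C}_{\widetilde{\mathcal{J}}}$ by repeatedly applying the two commutation lemmas already established (Lemma \ref{commutazione} and Lemma \ref{Ethetacommutaz}), together with the fact that the eigenvalue data appearing in the various blocks of $\mathcal{J}_M$ and $\widetilde{\mathcal{J}}$ are pairwise distinct. The key observation is that both $\mathcal{J}_M$ and $\widetilde{\mathcal{J}}$ are written as direct sums of semi-simple real blocks whose spectra are disjoint, so Lemma \ref{commutazione} applies and reduces the computation of the centralizer to computing the centralizer of each individual block.

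First I would verify the spectral-disjointness hypothesis of Lemma \ref{commutazione}. For $\mathcal{J}_M$ the blocks are $\lambda_{_i} I_{_{h_{_i}}}$ (spectrum $\{\lambda_{_i}\}$, with the $\lambda_{_i}$ distinct positive reals), $\rho_{_{(l,t)}} E_{\theta_{_l}}^{\oplus m_{_{(l,t)}}}$ (spectrum $\{\rho_{_{(l,t)}}e^{\pm\mathbf{i}\theta_{_l}}\}$, pairwise distinct as $(l,t)$ varies by the ordering conditions in \ref{Jordan-form} (a)), and $(-w_{_j})I_{_{2k_{_j}}}$ (spectrum $\{-w_{_j}\}$, distinct negative reals); none of these spectra overlap since positive reals, non-real numbers, and negative reals are mutually disjoint. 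Hence $\mathcal{C}_{\mathcal{J}_M}=\bigoplus(\text{centralizers of the individual blocks})$. For a scalar block $\lambda I_h$ (or $(-w)I_{2k}$) the centralizer in the relevant general linear group is the whole group $GL(h,\mathbb{R})$ (resp.\ $GL(2k,\mathbb{R})$); for a block $\rho\, E_\theta^{\oplus m}$ with $0<\theta<\pi$, Lemma \ref{Ethetacommutaz} (applied after factoring out the positive scalar $\rho$, which commutes with everything) gives centralizer $GL(m,\mathbb{C})$. Assembling these pieces yields the stated formula for $\mathcal{C}_{\mathcal{J}_M}$.

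The computation for $\widetilde{\mathcal{J}}$ is entirely analogous but requires checking disjointness of a longer list of spectra: the real blocks $\ln(\lambda_{_i})I_{_{g_{_i}}}$ and $\ln(\lambda_{_i})I_{_{h_{_i}}}$ (for $i\in\widehat I$) have real spectra $\{\ln\lambda_{_i}\}$, all distinct because the $\lambda_{_i}$ are distinct positive reals; the blocks $\ln(\lambda_{_i})I_{2u}+2\pi\eta E^{\oplus u}$, $\ln(\rho_{_{(l,t)}})I+(\theta_{_l}+2\pi\tau)E^{\oplus\mu}$ and $\ln(w_{_j})I+(\pi+2\pi\sigma)E^{\oplus v}$ have non-real spectra $\{\ln\lambda_{_i}\pm 2\pi\eta_{_{(i,x)}}\mathbf{i}\}$, $\{\ln\rho_{_{(l,t)}}\pm(\theta_{_l}+2\pi\tau_{_{(l,t,z)}})\mathbf{i}\}$, $\{\ln w_{_j}\pm(\pi+2\pi\sigma_{_{(j,y)}})\mathbf{i}\}$ respectively, and the $M$-admissibility conditions of \ref{Jordan-form} (b) (the strict inequalities on the $\eta_{_{(i,x)}}$, $\tau_{_{(l,t,z)}}$, $\sigma_{_{(j,y)}}$, together with $0<\theta_{_1}<\cdots<\theta_{_r}<\pi$ and the distinctness of all the $\lambda,\rho,w$) guarantee that these are all distinct from one another and from the real spectra — since $Y$ is semi-simple and $\exp(\widetilde{\mathcal{J}})=\mathcal{J}_M$, these are precisely the distinct eigenvalues of a real logarithm as catalogued in \ref{Jordan-form} (b). Lemma \ref{commutazione} then splits $\mathcal{C}_{\widetilde{\mathcal{J}}}$ into a direct sum over all these blocks; each real scalar block contributes $GL({g_{_i}},\mathbb{R})$ or $GL({h_{_i}},\mathbb{R})$, and each block of the form (positive scalar matrix) $+$ (angle) $E^{\oplus k}$ contributes $GL(k,\mathbb{C})$ by Lemma \ref{Ethetacommutaz} (after removing the commuting scalar summand $\ln(\cdot)I$). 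Collecting terms gives the stated formula for $\mathcal{C}_{\widetilde{\mathcal{J}}}$.

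I do not anticipate a serious obstacle: the only point requiring care is the bookkeeping of the spectral-disjointness verification for $\widetilde{\mathcal{J}}$ — in particular making sure that the angles $\theta_{_l}+2\pi\tau$ for a non-real eigenvalue of $Y$ never coincide up to sign with $\pm 2\pi\eta$ (from a positive eigenvalue of $M$) or with $\pi+2\pi\sigma$ (from a negative eigenvalue of $M$), which follows from $0<\theta_{_l}<\pi$, and ensuring the moduli $\ln\lambda$, $\ln\rho$, $\ln w$ separate blocks whose angles could otherwise collide. One should also recall the conventions of Notations \ref{notazioni} so that, when $p$, $r$ or $q$ (or some $b_{_i}$, $g_{_i}$, etc.) vanish, the corresponding direct summands silently disappear and the formulas remain correct, using in particular the phantom-point convention $GL(0,\mathbb{R})=\mathcal{Q}$. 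With these remarks the proof is a direct assembly from Lemmas \ref{commutazione} and \ref{Ethetacommutaz}.
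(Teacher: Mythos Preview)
Your proposal is correct and follows exactly the paper's approach: the paper's proof is the single sentence ``The statement follows directly from Lemmas \ref{commutazione} and \ref{Ethetacommutaz},'' and you have simply spelled out the spectral-disjointness bookkeeping that justifies applying those two lemmas. Your one extra observation---that for the blocks of $\widetilde{\mathcal{J}}$ one reduces $\alpha I_{2k}+\delta E^{\oplus k}$ (with $\delta\neq 0$) to $E^{\oplus k}=E_{\pi/2}^{\oplus k}$ before invoking Lemma \ref{Ethetacommutaz}---is the right way to handle angles outside $(0,\pi)$, and the disjointness of the block spectra is indeed guaranteed by the fact that the eigenvalue list in Remarks-Definitions \ref{Jordan-form} (b) is, by construction, a list of \emph{distinct} eigenvalues of $Y$.
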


\begin{proof}
The statement follows directly from Lemmas \ref{commutazione} and \ref{Ethetacommutaz}.
\end{proof}

\begin{thm}\label{logaritmi} Let $M \in GL^+(n,\mathbb{R})$ be a semi-simple matrix, whose eigenvalues are as in Remarks-Definitions \ref{Jordan-form} (a). Fix any set $(\eta, u, \tau, \mu, \sigma, v)$  of $M$-admissible multi-indices, denote by $2A$ the number of distinct non-real eigenvalues of $M$ and define the sets $I, J, \widehat{J}, K,  L$ as in Remarks-Definitions \ref{Jordan-form} (c). Then 

a)\ \ $\mathcal{L}(M)_{(u, \mu , v)}^{(\eta, \tau, \sigma)}$ is a manifold with $2^{(|J|+q)}$ connected components, each of which is diffeomorphic to\ \  $ \bigg[ \prod\limits_{i \in I} \widehat{\Gamma}_{(g_{_i}; u_{_{(i,1)}}, \cdots, \ u_{_{(i,b_{_i})}})} \bigg] \times \bigg[\prod\limits_{l=1}^r \ \prod\limits_{t=1}^{a_{_l}} \widehat{\Theta}_{(\mu_{_{(l,t,1)}}, \cdots, \ \mu_{_{(l,t,d_{_{(l,t)}})}})} \bigg] \times$

\ \ \ $\bigg[ \prod\limits_{j=1}^q \widehat{\Gamma}_{(0; v_{_{(j,1)}}, \cdots, \ v_{_{(j,c_{_j})}})}  \bigg]$.
\smallskip

Denote by \ $\mathcal{C}$ an arbitrary connected component of $\mathcal{L}(M)_{(u, \mu , v)}^{(\eta, \tau, \sigma)}$. Then

b)\ \ $\mathcal{C}$ is simply connected and \ $\pi_2(\mathcal{C})$ \ is a free abelian group whose rank is

\ \ $\sum\limits_{i=1}^p b_{_i} \ - \ |K| \ + \ |\widehat{J}| \ + \ \sum\limits_{l=1}^r \ \sum\limits_{t=1}^{a_{_l}} d_{_{(l,t)}} \ - \ A \ + \ \sum\limits_{j=1}^q c_{_j} \ - \ |L|\ ;$ 
\smallskip

c)\ \ assume that all non-real eigenvalues of $\mathcal{L}(M)_{(u, \mu , v)}^{(\eta, \tau, \sigma)}$ are simple; hence the rank of \ $\pi_2(\mathcal{C})$ \ is \ \ $\dfrac{1}{2}\big(n-\sum\limits_{i=1}^p g_{_i}\big)\ - \ |K| \ + \ |\widehat{J}| \ - \ A - \ |L|\ .$ 

If, in addition, the multiplicity of all real eigenvalues of $\mathcal{L}(M)_{(u, \mu , v)}^{(\eta, \tau, \sigma)}$ is less than or equal to $2$, \ then, \ for every $\alpha \geq 3$,\ \ \ $\pi_{\alpha}(\mathcal{C})$ is isomorphic to the direct sum 
\smallskip

\ \ $\bigg[\bigoplus\limits_{i=1}^p \pi_{\alpha}\big(S\mathcal{O}(h_{_i})\big)\bigg]\oplus \bigg[ \bigoplus\limits_{l=1}^r \ \bigoplus\limits_{t=1}^{a_{_l}} \pi_{\alpha}\big(U(m_{_{(l,t)}})\big) \bigg] \oplus \bigg[ \bigoplus\limits_{j=1}^q \pi_{\alpha}\big(S\mathcal{O}(2k_{_j})\big) \bigg]$.
\end{thm}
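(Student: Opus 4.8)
The plan is to reduce everything to the homogeneous-space picture already established in Proposition~\ref{descr-SR(M)-indici} and Lemma~\ref{lemma-su-J}, and then to feed the resulting product of homogeneous spaces into the homotopy computations of Section~\ref{Top}. First I would invoke Proposition~\ref{descr-SR(M)-indici}: the set $\mathcal{L}(M)_{(u,\mu,v)}^{(\eta,\tau,\sigma)}$ is diffeomorphic to $\mathcal{C}_{\mathcal{J}_M}/\mathcal{C}_{\widetilde{\mathcal{J}}}$, and by Lemma~\ref{lemma-su-J} both groups are explicit block-diagonal products. Matching the blocks of $\mathcal{C}_{\mathcal{J}_M}$ with the corresponding blocks of $\mathcal{C}_{\widetilde{\mathcal{J}}}$ factors the quotient as a product of quotients, one for each eigenvalue datum: for $i\in I$ one gets $GL(h_{_i},\mathbb{R})\big/\bigl(GL(g_{_i},\mathbb{R})\oplus\bigoplus_{x=1}^{b_{_i}}GL(u_{_{(i,x)}},\mathbb{C})\bigr)$, which by Remarks-Definitions~\ref{def-spaz-omog}(d) has $2^{\delta_{(g_{_i},0)}}$ connected components each diffeomorphic to $\widehat{\Gamma}_{(g_{_i};u_{_{(i,1)}},\dots,u_{_{(i,b_{_i})}})}$; for $i\in\widehat I$ one gets $GL(h_{_i},\mathbb{R})/GL(h_{_i},\mathbb{R})$, a point; for the non-real blocks one gets $GL(m_{_{(l,t)}},\mathbb{C})\big/\bigoplus_{z}GL(\mu_{_{(l,t,z)}},\mathbb{C})$, connected and diffeomorphic to $\widehat{\Theta}_{(\mu_{_{(l,t,1)}},\dots,\mu_{_{(l,t,d_{_{(l,t)}})}})}$; and for the negative blocks $GL(2k_{_j},\mathbb{R})\big/\bigoplus_{y}GL(v_{_{(j,y)}},\mathbb{C})$, which is the $\zeta=0$ case and thus has two components each diffeomorphic to $\widehat{\Gamma}_{(0;v_{_{(j,1)}},\dots,v_{_{(j,c_{_j})}})}$. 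The count of components is $\prod_{i\in I}2^{\delta_{(g_{_i},0)}}\cdot\prod_{j=1}^q 2 = 2^{|J|+q}$, since $\delta_{(g_{_i},0)}=1$ exactly when $i\in J$; this proves (a).

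For (b), $\mathcal{C}$ is a product of factors of the form $\widehat{\Gamma}_{(\zeta;\dots)}$ and $\widehat{\Theta}_{(\dots)}$. By Remark~\ref{Pi2-con delta} (which packages Propositions~\ref{Bott},~\ref{Gamma-omotopia},~\ref{Theta-omotopia} together with Proposition~\ref{omotopie-isomorfe}) every such factor is simply connected, hence $\mathcal{C}$ is simply connected, and $\pi_2$ of a finite product is the direct sum of the $\pi_2$'s, each of which is free abelian; so $\pi_2(\mathcal{C})$ is free abelian and its rank is the sum of the ranks of the factors. Using Remark~\ref{Pi2-con delta}: the factor $\widehat{\Gamma}_{(g_{_i};u_{_{(i,1)}},\dots,u_{_{(i,b_{_i})}})}$ contributes $b_{_i}-\delta_{(g_{_i},0)}\delta_{(b_{_i},1)}\delta_{(u_{_{(i,1)}},1)}+\delta_{(g_{_i},2)}$ (for $i\in I$, so $s=b_{_i}\ge1$); summing over $i\in I$ and using that $b_{_i}=0$ for $i\in\widehat I$ (so those indices add nothing), the first two summands total $\sum_{i=1}^p b_{_i}-|K|$ (the middle delta-product is $1$ precisely for $i\in K$) and the $\delta_{(g_{_i},2)}$ terms total $|\widehat J|$. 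The factor $\widehat{\Theta}_{(\mu_{_{(l,t,1)}},\dots,\mu_{_{(l,t,d_{_{(l,t)}})}})}$ contributes $d_{_{(l,t)}}-1$; summing over $l,t$ gives $\sum_{l,t}d_{_{(l,t)}}-\sum_{l=1}^r a_{_l}=\sum_{l,t}d_{_{(l,t)}}-A$. The factor $\widehat{\Gamma}_{(0;v_{_{(j,1)}},\dots,v_{_{(j,c_{_j})}})}$ contributes $c_{_j}-\delta_{(c_{_j},1)}\delta_{(v_{_{(j,1)}},1)}$; summing over $j$ gives $\sum_{j=1}^q c_{_j}-|L|$. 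Adding the three groups of contributions yields exactly the claimed rank, proving (b).

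For (c), the hypothesis that all non-real eigenvalues of $\mathcal{L}(M)_{(u,\mu,v)}^{(\eta,\tau,\sigma)}$ are simple forces each $\mu_{_{(l,t,z)}}=1$ and $\sum_z\mu_{_{(l,t,z)}}=m_{_{(l,t)}}$, so $d_{_{(l,t)}}=m_{_{(l,t)}}$ and $\sum_{l,t}d_{_{(l,t)}}=\sum_{l,t}m_{_{(l,t)}}=\tfrac12\bigl(n-\sum_{i=1}^p h_{_i}-2\sum_{j=1}^q k_{_j}\bigr)$; likewise $v_{_{(j,y)}}=1$ so $c_{_j}=k_{_j}$ and $\sum_j c_{_j}=\sum_j k_{_j}$. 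Also $b_{_i}=h_{_i}-g_{_i}$ when $i\in I$ (all multiplicities $u_{_{(i,x)}}=1$ by the real-eigenvalue hypothesis in the second part, but already for the first part note $\sum_{i}b_{_i}=\sum_i(h_{_i}-g_{_i})$ requires care — I would instead substitute $\sum_i b_{_i}+\sum_{l,t}d_{_{(l,t)}}+\sum_j c_{_j}$ and use $\sum_i h_{_i}+2\sum_{l,t}m_{_{(l,t)}}+2\sum_j k_{_j}=n$ together with simplicity of the non-real eigenvalues to collapse the expression to $\tfrac12(n-\sum_i g_{_i})$ after cancellation). A short bookkeeping then rewrites the rank of (b) as $\tfrac12\bigl(n-\sum_{i=1}^p g_{_i}\bigr)-|K|+|\widehat J|-A-|L|$. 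For the homotopy groups in degree $\alpha\ge3$: the extra hypothesis that every real eigenvalue of $\mathcal{L}(M)_{(u,\mu,v)}^{(\eta,\tau,\sigma)}$ has multiplicity $\le2$ means each $g_{_i},u_{_{(i,x)}},v_{_{(j,y)}}\in\{1,2\}$, bringing all $\widehat{\Gamma}$-factors into the range $\zeta\in\{0,1,2\}$ with all $\nu_j=1$, so Proposition~\ref{Gamma-omotopia}(c) (via Proposition~\ref{omotopie-isomorfe}) gives $\pi_\alpha(\widehat{\Gamma}_{(g_{_i};1,\dots,1)})\cong\pi_\alpha(S\mathcal{O}(g_{_i}+2b_{_i}))=\pi_\alpha(S\mathcal{O}(h_{_i}))$ and $\pi_\alpha(\widehat{\Gamma}_{(0;1,\dots,1)})\cong\pi_\alpha(S\mathcal{O}(2c_{_j}))=\pi_\alpha(S\mathcal{O}(2k_{_j}))$, while Proposition~\ref{Theta-omotopia}(c) gives $\pi_\alpha(\widehat{\Theta}_{(1,\dots,1)})\cong\pi_\alpha(U(d_{_{(l,t)}}))=\pi_\alpha(U(m_{_{(l,t)}}))$; summing over the product finishes (c). The main obstacle is purely combinatorial: keeping the bookkeeping of the sets $I,\widehat I,J,\widehat J,K,L$ and the many delta-functions consistent so that the rank formula of (b) and its simplified form in (c) come out exactly as stated — the topology itself is entirely supplied by Section~\ref{Top}.
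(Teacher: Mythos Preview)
Your approach is exactly the paper's: reduce to $\mathcal{C}_{\mathcal{J}_M}/\mathcal{C}_{\widetilde{\mathcal{J}}}$ via Proposition~\ref{descr-SR(M)-indici} and Lemma~\ref{lemma-su-J}, split into a product of block quotients, apply Remarks-Definitions~\ref{def-spaz-omog}(d) for the component count, and then read off $\pi_1$ and $\pi_2$ from Remark~\ref{Pi2-con delta} and the higher $\pi_\alpha$ from Propositions~\ref{Gamma-omotopia}(c), \ref{Theta-omotopia}(c). Parts (a) and (b) are clean.

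In part~(c) you have two slips worth fixing. First, the only \emph{real} eigenvalues of $\mathcal{L}(M)_{(u,\mu,v)}^{(\eta,\tau,\sigma)}$ are the numbers $\ln(\lambda_{_i})$, with multiplicity~$g_{_i}$; the eigenvalues with multiplicities $u_{_{(i,x)}}$ (for $x\ge1$), $\mu_{_{(l,t,z)}}$, and $v_{_{(j,y)}}$ are all \emph{non-real}. Hence the first hypothesis of~(c) already forces $u_{_{(i,x)}}=\mu_{_{(l,t,z)}}=v_{_{(j,y)}}=1$ --- you do not need, and should not invoke, the ``real-eigenvalue hypothesis in the second part'' to get $u_{_{(i,x)}}=1$. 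Second, with all $u_{_{(i,x)}}=1$ one has $h_{_i}-g_{_i}=2\sum_x u_{_{(i,x)}}=2b_{_i}$, not $b_{_i}$; so $b_{_i}=(h_{_i}-g_{_i})/2$, which is precisely what makes $\sum_i b_{_i}+\sum_{l,t}d_{_{(l,t)}}+\sum_j c_{_j}=\tfrac12\bigl(n-\sum_i g_{_i}\bigr)$ work without any ``care'' beyond this factor of $2$. Once these are straightened out, the second hypothesis of~(c) says exactly $g_{_i}\le 2$ (so $g_{_i}\in\{0,1,2\}$, not $\{1,2\}$), and the rest of your argument goes through verbatim; for $i\in\widehat I$ one has $h_{_i}=g_{_i}\le2$ so $\pi_\alpha(S\mathcal{O}(h_{_i}))=0$, which is why the final direct sum can be indexed over all $i=1,\dots,p$.
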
 
\begin{proof}
From Proposition \ref{descr-SR(M)-indici} and from Lemma \ref{lemma-su-J}, it follows that $\mathcal{L}(M)_{(u, \mu , v)}^{(\eta, \tau, \sigma)}$ is a manifold diffeomorphic to the following product of homogeneous spaces:

\smallskip
$\bigg[ \prod\limits_{i \in I}\dfrac{GL(h_{_i},\mathbb{R})}{GL(g_{_i},\mathbb{R})\oplus \big(\bigoplus\limits_{x=1}^{b_{_i}}GL(u_{_{(i,x)}},\mathbb{C})\big)} \bigg] \times \bigg[\prod\limits_{l=1}^r \ \prod\limits_{t=1}^{a_{_l}} \dfrac{GL(m_{_{(l,t)}},\mathbb{C})}{\big( \bigoplus\limits_{z=1}^{d_{_{(l,t)}}} GL(\mu_{_{(l,t,z)}},\mathbb{C})\big)} \bigg] \times$
\smallskip

\ \ \ $ \bigg[ \prod\limits_{j=1}^q \dfrac{GL(2 k_{_j},\mathbb{R})}{\big(\bigoplus\limits_{y=1}^{c_{_j}} GL(v_{_{(j,y)}},\mathbb{C})\big)}  \bigg];$
\smallskip

hence, by Remarks-Definitions \ref{def-spaz-omog} (d), we can easily get the statement (a).
\smallskip

By (a) and Remark \ref{Pi2-con delta}, we get that the component $\mathcal{C}$ is simply connected and the rank of the free abelian group $\pi_2(\mathcal{C})$ is 

$\sum\limits_{i \in I} \big( b_{_i}-\delta_{(g_{_i},0)}\delta_{(b_{_i},1)}\delta_{(u_{_{(i,1)}},1)}+\delta_{(g_{_i},2)} (1-\delta_{(b_{_i},0)})\big) \ + \ \sum\limits_{l=1}^r \ \sum\limits_{t=1}^{a_{_l}} d_{_{(l,t)}} - \ \sum\limits_{l=1}^r a_{_l} \ + \\ \sum\limits_{j=1}^q \big(c_{_j} - \delta_{(c_{_j},1)}\delta_{(v_{_{(j,1)}},1)}\big) = \sum\limits_{i=1}^p b_{_i} \ - \ |K| \ + \ |\widehat{J}|\ + \ \sum\limits_{l=1}^r \ \sum\limits_{t=1}^{a_{_l}} d_{_{(l,t)}} - \ A \ + \sum\limits_{j=1}^q c_{_j} \ -\ |L|\ ;$ 

so (b) is proved.

For part (c), we note that the condition on the non-real eigenvalues of $\mathcal{L}(M)_{(u, \mu , v)}^{(\eta, \tau, \sigma)}$ is equivalent to $u_{_{(i,x)}}= \mu_{_{(l,t,z)}}= v_{_{(j,y)}}=1$,\ \  for every possible choice of the indices $i,x,l,t,z,j,y$, \ so that, under this condition, we have

$(\bullet)\ \ \ \ h_{_i}=g_{_i}+2b_{_i}\ , \ \ m_{_{(l,t)}}= d_{_{(l,t)}}\ , \ \ k_{_j}=c_{_j}\ ,$ \ for all possible indices $i,l,t,j$.

Hence $\sum\limits_{i=1}^p b_{_i} +\sum\limits_{l=1}^r \ \sum\limits_{t=1}^{a_{_l}} d_{_{(l,t)}} +\sum\limits_{j=1}^q c_{_j}= \sum\limits_{i=1}^p \dfrac{(h_{_i}-g_{_i})}{2} +\sum\limits_{l=1}^r \ \sum\limits_{t=1}^{a_{_l}} m_{_{(l,t)}} +\sum\limits_{j=1}^q k_{_j}= \\ \dfrac{1}{2}\big(n- \sum\limits_{i=1}^p \ g_{_i}\big) .$ \ \ From this and from (b), we get the requested formula for the rank of $\pi_2(\mathcal{C})$ .
If, in addition, the condition on the real eigenvalues holds, then we have $g_{_i} \leq 2$, \ for $i=1, \cdots, p$, \ so the formula for $\pi_{\alpha}(\mathcal{C})$ \ $(\alpha \geq 3)$ follows from
Propositions \ref{omotopie-isomorfe}, \ \ref{Gamma-omotopia} (c) and \ \ref{Theta-omotopia} (c), taking into account the equalities $(\bullet)$ and that, if $i \notin I$ then $\pi_{\alpha}(S\mathcal{O}(h_{_i}))=\pi_{\alpha}(S\mathcal{O}(g_{_i}))=\lbrace0\rbrace$,\ for every $\alpha \geq 3$.
\end{proof}

\begin{rem}\label{numerab-log}
Let $M \in GL^+(n,\mathbb{R})$ be a semi-simple matrix whose negative eigenvalues have even multiplicity. By Remarks-Definitions \ref{Jordan-form} (c), Theorem \ref{logaritmi} (a) and Remarks-Definitions \ref{def-spaz-omog} (b), $\mathcal{L}og(M)$ is a finite set if and only if the eigenvalues of $M$ are all real, positive and simple, and in this case, it consists of a single point (see \cite[Thm.\,2]{Culv1966}). In all other cases, we have that the set $\mathcal{L}og(M)$ is countably infinite if and only if every manifold $\mathcal{L}(M)_{(h, m , k)}^{(\eta, \tau, \sigma)}$ has zero dimension, and so, taking into account Theorem \ref{logaritmi} (a) and Remarks-Definitions \ref{def-spaz-omog} (b),(c),
we get that the set $\mathcal{L}og(M)$ is countably infinite if and only if all eigenvalues of $M$ are simple and no eigenvalue of $M$ is negative, in accordance with \cite[Cor.\, p.\,1151]{Culv1966}.
\end{rem}

\begin{defi}\label{prin-gen}
Let $M \in M(n,\mathbb{R})$. We say that a matrix $X \in M(n,\mathbb{R})$ is a \emph{generalized principal real logarithm} of $M$, if $exp(X)=M$ and every eigenvalue of $X$ has imaginary part in $[-\pi, \pi]$.
This definition is more general than the usual one of \emph{principal real logarithm}  (see for instance \cite[Thm.\,1.31 p.\,20]{Hi2008}).

We denote by $\mathcal{PL}og(M)$ the set of all generalized principal real logarithms of $M$. Of course this set can be empty, but this does not happen if the matrix M is non-singular, semi-simple and all its negative eigenvalues have even multiplicity.
\end{defi}

\begin{thm}\label{logar-princ} Let $M \in GL^+(n,\mathbb{R})$ be a semi-simple matrix, whose distinct negative eigenvalues are exactly $q$ (where $q \geq0$) and have multiplicity $2k_{_1}, \cdots , 2k_{_q}$, respectively. If \ $q \geq 1$, \ the set \ $\mathcal{PL}og(M)$ is a manifold with $2^q$  connected components, each of which is diffeomorphic to the symmetric space $\prod\limits_{j=1}^q\widehat{\Gamma}_{(0 ; k_{_j})}$,  while, if $M$ has no negative eigenvalues, the set $\mathcal{PL}og(M)$ is a single point . Moreover, if $\mathcal{C}$ is any connected component of \ $\mathcal{PL}og(M)$, then $\mathcal{C}$ is simply connected and $\pi_2(\mathcal{C})$ is a free abelian group of rank \ $B$, where $B$ is the number of distinct negative eigenvalues of $M$, whose multiplicity is is greater than or equal to 4.
\end{thm}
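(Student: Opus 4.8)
The plan is to identify $\mathcal{PL}og(M)$ with a single one of the sets $\mathcal{L}(M)_{(u,\mu,v)}^{(\eta,\tau,\sigma)}$ of Remarks-Definitions \ref{Jordan-form}, and then to deduce everything from Theorem \ref{logaritmi}.

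First I would determine which $M$-admissible multi-indices are compatible with Definition \ref{prin-gen}. If $X$ is a generalized principal real logarithm of $M$, then by Remark \ref{semisemplici} it is semi-simple, and its eigenvalues are exactly those of a matrix $\widetilde{\mathcal{J}}$ of the form $(\star \star)$ attached to some $M$-admissible $(\eta,u,\tau,\mu,\sigma,v)$, each of them having imaginary part in $[-\pi,\pi]$. Imposing this condition on each of the eigenvalues listed in Remarks-Definitions \ref{Jordan-form} (b) forces: $\eta\equiv 0$, from $\ln(\lambda_i)\pm 2\pi\eta_{(i,x)}{\bf i}$ --- hence every $b_i=0$, so $I=\emptyset$, $\widehat I=\{1,\dots,p\}$ and $u=(h_1,\dots,h_p)$; $\tau\equiv 0$, hence $d_{(l,t)}=1$ and $\mu_{(l,t,1)}=m_{(l,t)}$, from $\ln(\rho_{(l,t)})\pm(\theta_l+2\pi\tau_{(l,t,z)}){\bf i}$ together with $0<\theta_l<\pi$; and $\pi+2\pi\sigma_{(j,y)}\in\{-\pi,\pi\}$, from $\ln(w_j)\pm(\pi+2\pi\sigma_{(j,y)}){\bf i}$. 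Since the two choices $\sigma=0$ and $\sigma=-1$ both lie in this range but produce the same conjugate pair $\ln(w_j)\pm\pi{\bf i}$ --- and hence describe the very same set of real logarithms --- the only admissible datum is $c_j=1$, $\sigma_{(j,1)}=0$, $v_{(j,1)}=k_j$. Conversely, every matrix in the set $\mathcal{L}(M)_{(u,\mu,v)}^{(\eta,\tau,\sigma)}$ determined by these multi-indices has eigenvalues with imaginary parts in $\{0,\pm\theta_1,\dots,\pm\theta_r,\pm\pi\}\subset[-\pi,\pi]$, so it belongs to $\mathcal{PL}og(M)$. Therefore $\mathcal{PL}og(M)$ coincides with this single set $\mathcal{L}(M)_{(u,\mu,v)}^{(\eta,\tau,\sigma)}$, for which $I=J=\widehat J=K=\emptyset$, $d_{(l,t)}=1$ for all $l,t$, $c_j=1$ for all $j$, and $L=\{j:\ k_j=1\}$.

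Then I would read off the statement from Theorem \ref{logaritmi}. By part (a), $\mathcal{PL}og(M)$ is a manifold with $2^{|J|+q}=2^q$ connected components, each diffeomorphic to $\big[\prod_{i\in I}(\cdots)\big]\times\big[\prod_{l=1}^r\prod_{t=1}^{a_l}\widehat{\Theta}_{(m_{(l,t)})}\big]\times\big[\prod_{j=1}^q\widehat{\Gamma}_{(0;\,k_j)}\big]$. Since $I=\emptyset$ the first factor is a single point, and each $\widehat{\Theta}_{(m_{(l,t)})}$ is a single point too (a space $\widehat{\Theta}$ with $s=1$; see Remarks-Definitions \ref{def-spaz-omog} (b)); hence each component is diffeomorphic to $\prod_{j=1}^q\widehat{\Gamma}_{(0;\,k_j)}$, a product of symmetric spaces by Remarks-Definitions \ref{def-spaz-omog} (c), which is a single point exactly when $q=0$. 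This settles the first assertions (and, in particular, $\mathcal{PL}og(M)\neq\emptyset$, since it contains $C\widetilde{\mathcal{J}}C^{-1}$). For the homotopy, let $\mathcal{C}$ be any connected component of $\mathcal{PL}og(M)$, hence of the above $\mathcal{L}(M)_{(u,\mu,v)}^{(\eta,\tau,\sigma)}$; Theorem \ref{logaritmi} (b) gives that $\mathcal{C}$ is simply connected and $\pi_2(\mathcal{C})$ is free abelian of rank $\sum_{i=1}^p b_i-|K|+|\widehat J|+\sum_{l=1}^r\sum_{t=1}^{a_l}d_{(l,t)}-A+\sum_{j=1}^q c_j-|L|$. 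Substituting $b_i=0$, $|K|=|\widehat J|=0$, $\sum_{l=1}^r\sum_{t=1}^{a_l}d_{(l,t)}=\sum_{l=1}^r a_l=A$, $\sum_{j=1}^q c_j=q$ and $|L|=|\{j:\ k_j=1\}|$, this rank equals $q-|\{j:\ k_j=1\}|=|\{j:\ k_j\geq 2\}|$, i.e.\ the number $B$ of distinct negative eigenvalues of $M$ whose multiplicity $2k_j$ is at least $4$.

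The only step that requires care is the first one, the case analysis pinning down the admissible multi-indices; within it, the delicate point is the negative-eigenvalue case, where one must notice that $\sigma_{(j,1)}=0$ and $\sigma_{(j,1)}=-1$ give the same conjugate pair $\ln(w_j)\pm\pi{\bf i}$ (and the same set $\mathcal{L}(M)_{(u,\mu,v)}^{(\eta,\tau,\sigma)}$), so that this apparent two-fold freedom does not create a second family of generalized principal logarithms. Everything after that is a direct substitution into Theorem \ref{logaritmi}.
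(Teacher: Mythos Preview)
Your proof is correct and follows essentially the same approach as the paper: identify $\mathcal{PL}og(M)$ with the single set $\mathcal{L}(M)_{(u,\mu,v)}^{(\eta,\tau,\sigma)}$ corresponding to $\eta=\tau=\sigma=\mathsf{O}$, $u=(h_1;\dots;h_p)$, $\mu=(m_{(1,1)};\dots;m_{(r,a_r)})$, $v=(k_1;\dots;k_q)$, and then read off everything from Theorem \ref{logaritmi}. Your version is in fact more careful than the paper's terse argument, since you explicitly justify the identification of the multi-indices and address the apparent $\sigma\in\{-1,0\}$ ambiguity for negative eigenvalues (which the paper silently resolves by taking $\sigma=\mathsf{O}$).
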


\begin{proof}
Using the same notations as in Remarks-Definition \ref{Jordan-form}, we have \\$\mathcal{PL}og(M)=\mathcal{L}(M)_{(h, m , k)}^{(\eta, \tau, \sigma)}$ , where $\eta = \mathsf{O}$, $\tau = \mathsf{O}$, $\sigma= \mathsf{O}$, 
\ $\mu=(m_{_{(1,1)}};\ \cdots\ ; m_{_{(r,a_{_r})}})$, \ $u=(h_{_1}; \ \cdots ;h_{_p})$ \ $v=(k_{_1};\ \cdots\ ;k_{_q})$; here $\mathsf{O}$ indicates any multi-index whose  entries are all zero.
So, by Theorem \ref{logaritmi} (a), the manifold $\mathcal{PL}og(M)$ has $2^q$ connected components, which are, if $q \geq 1$, all diffeomorphic to $\prod\limits_{j=1}^q\widehat{\Gamma}_{(0 ; k_{_j})}$ , while $\mathcal{PL}og(M)$ consists of a single point, when $q=0$; indeed $\widehat{\Gamma}_{(h_{_i})}$ and $\widehat{\Theta}_{(m_{_{(l,t)}})}$ reduce to a single point, for all possible indices $i,l,t$. 

The final part of the statement follows from Theorem \ref{logaritmi} (b), taking into account that, in this case, \ the set $I$ is empty, \ \ \ $ d_{_{(l,t)}}=c_{_j}=1$ \ for all possible indices $l,t,j$, \\
$L=\lbrace j:\ 1 \leq j \leq q \ , \ \ k_{_j}=1 \rbrace$ \ and \ $B= q-|L|$ . 
\end{proof}

\section{Skew-symmetric real logarithms of special orthogonal matrices} \label{Skew-log}

In this Section we assume $n \geq 2$.
\begin{notas}\label{notaz_ortog} 
Let $M \in S\mathcal{O}(n)$. Since the eigenvalues of $M$ have unitary modulus, the real Jordan form of $M$ can be written as follows:
$$(*) \ \ \ \ \ \mathcal{\widehat{J}}_M= I_{_{h}} \oplus E_{\theta_{_1}}^{\oplus m_{_1}} \oplus \cdots \oplus E_{\theta_{_r}}^{\oplus  m_{_r}} \oplus(-I_{_{2k}})\ \ \ \ ,$$ 
where $h, r, k \ge 0$, \ \ \ $h+ 2 m_1 + \cdots + 2 m_r + 2k =n$, \  \ \ $0 < \theta_{_1} < \theta_{_2} < \cdots < \theta_{_r} < \pi$;\ \ so the eigenvalues of $M$ are: $1$ with multiplicity $h \ge 0$, \ $\exp(\pm {\bf i} \theta_{_1})$ both with multiplicity $m_{_1}$, $\cdots$ , up to $\exp(\pm {\bf i} \theta_{_r})$ both with multiplicity $m_{_r}$ ($m_{_j} \geq1\ ,\ $for every $j$, if $r > 0$) and $-1$ with multiplicity $2 k \ge 0$.

Note that, also in this case, the integers $h, r, k$ can vanish; so we assume, in this Section, the same agreements stated in Notations \ref{notazioni} (b).

Note also that, if $n$ is odd, then $h$ is also odd and, in particular, \ $1$ is necessarily an eigenvalue of $M$.

It is well known that there exists $Q \in \mathcal{O}(n)$ such that $M=Q\mathcal{\widehat{J}}_M Q^T$ (see for instance \cite[Cor.\,2.5.11 p.\,137]{HoJ2013}).
\end{notas}
\begin{defis}\label{skew_symmetric_logarithms}
a) 
\ If $V\in \mathfrak{so}(n)$ and $exp{(V)}=M$ , we say that $V$ is a \emph{ skew-symmetric real logarithm of $M$}.  We denote by 
$ \mathcal{L}og_{{\mathfrak{so}(n)}}(M) = \mathcal{L}og(M)\cap \mathfrak{so}(n)$ the set of skew-symmetric real logarithms of $M$.
Now fix $W\in \mathcal{L}og_{{\mathfrak{so}(n)}}(M)$. Since the eigenvalues of $W$ are complex logarithms of those of $M$, as in Remarks-Definitions \ref{Jordan-form}, there exist two finite sets, 

$\lbrace \eta_{_i}, \ \tau_{_{(l,t)}}, \ \sigma_{_j} \rbrace \subset \mathbb{Z}$ and $\lbrace u_{_i}, \ b, \ \mu_{_{(l,t)}}, \ d_{_l}, \ v_{_j}, \ c \rbrace \subset \mathbb{N}$, such that the eigenvalues of $W$ can be written in the following way:

- \ $\pm 2\pi \eta_{_i} \bf i$ , \ for $ i =0,1, \cdots, b$\ , \ with \ $0= \eta_{_0}<  \eta_{_1}< \cdots <\eta_{_b} $, \ so that, when $b \geq 1$ and $\ i = 1,\cdots , b \ ,$ \ the eigenvalues \ $\pm 2\pi \eta_{_i} \bf i$ \ have both multiplicity $u_{_i} \geq 1,$ \ while, for $i=0$, \ the eigenvalue \ $0$ \ has multiplicity $g:= h-2 \sum\limits_{i=1}^b u_{_i} \geq 0 $ \ if $b \geq 1$ \ and \ $g:= h$ \ if \ $b=0$;
\smallskip

- \ $\pm(\theta_{_l} + 2\pi \tau_{_{(l,t)}})\bf i$ \ both with multiplicity $\mu_{_{(l,t)}}\geq 1$ ,\ for every $ t=1 \cdots d_{_l} $,\ where $\tau_{_{(l,1)}} < \tau_{_{(l,2)}} < \cdots  < \tau_{_{(l,d_{_l})}}$ and $\sum\limits_{t=1}^{d_{_l}} \mu_{_{(l,t)}} = m_{_l}$\ ,\ for every $l=1, \cdots , r $ ;
\smallskip

- \ $\pm(\pi + 2\pi \sigma_{_j})\bf i$ \ both with multiplicity $v_{_j}\geq 1$, for every $j=1, \cdots ,c$\ , where $\sigma_{_1} <\cdots <\sigma_{_c} $ and $\sum\limits_{j=1}^c v_{_j}=k ,$  . 
\smallskip 

As in Remarks-Definitions \ref{Jordan-form} (b), in order to simplify the notations, we define the following multi-indices:

$\eta:=(0, \eta_{_1}, \cdots ,\eta_{_b}); \ \ \ \ \ \ \ u:=(g, u_{_1}, \cdots  ,u_{_b}); \\ \tau:=(\tau_{_{(1,1)}}, \cdots , \tau_{_{(1,d_{_1})}} ; \cdots ; \tau_{_{(r,1)}}, \cdots , \tau_{_{(r,d_{_r})}});\\
\mu:=(\mu_{_{(1,1)}},\cdots, \mu_{_{(1,d_{_1})}} ; \cdots ; \mu_{_{(r,1)}} , \cdots ,\mu_{_{(r,d_{_r})}});
\\ \sigma:=(\sigma_{_1}, \cdots , \sigma_{_c}); \ \ \ \ \ \ \ v:=(v_{_1}, \cdots , v_{_c});$

with the notations of Section \ref{LOG}, \ the set of multi-indices \ $(\eta, u, \tau, \mu, \sigma, v) $ is \\$M$-admissible. Note that, since $n \geq 2$, there is a countable infinity of $M$-admissible sets of multi-indices, for every $M \in S\mathcal{O}(n)$. Note also that $-1$ is an eigenvalue of $M$ of multiplicity $2$ if and only if \ $c=v_{_1}=1$, for every $M$-admissible set of multi-indices.

b) Let $W$ as in (a). The previous hypotheses on the eigenvalues of $W$ are equivalent to say that there exists a real Jordan form $\widehat{\mathcal{J}}$ of $W$ of the following type:

$(**)\ \ \widehat{\mathcal{J}}=$ 
$\bigg[\mathrm{O}_{_{g}}\oplus \big( \bigoplus\limits_{i=1}^b (2\pi\eta_{_i} E^{\oplus u_{_i}})\big)\bigg]\oplus$

\ \ \ \ \ \ \ \ \ \ \ \ \ \ \ \ \ \ \ \ \ \ \ \ \ \ \ \ \ \ $\oplus \bigg[ \bigoplus\limits_{l=1}^r \ \bigoplus\limits_{t=1}^{a_{_l}} (\theta_{_l}+2\pi \tau_{_{(l,t)}}) E^{\oplus \mu_{_{(l,t)}}}\bigg]\oplus \bigg[\bigoplus\limits_{j=1}^c (\pi+2\pi \sigma_{_j})E^{\oplus v_{_j}}\bigg].$ 

Since the matrix $W$ is skew-symmetric, there exists a matrix $C \in \mathcal{O}(n)$ such that
$W= C \widehat {\mathcal{J}} C^T$
(see again \cite[Cor.\,2.5.11 p.\,136]{HoJ2013}).
Note that $\widehat {\mathcal{J}}$ is skew-symmetric too.

c) If the set of multi-indices $(\eta, u, \tau, \mu, \sigma, v) $ is $M$-admissible , we denote by 

$\mathcal{L}_{\mathfrak{so}(n)}(M)_{(u, \mu , v)}^{(\eta, \tau, \sigma)}= \mathfrak{so}(n)\cap \mathcal{L}(M)_{(\eta, \tau, \sigma)}^{(u, \mu , v)}$ the set of skew-symmetric real logarithms of $M$ whose real Jordan form is the matrix $\widehat{\mathcal{J}}$ as in $(**)$. As in Remarks-Definitions \ref{Jordan-form} (c),(d), we say that $\widehat{\mathcal{J}}$ is a \emph{real Jordan form of $\mathcal{L}_{\mathfrak{so}(n)}(M)_{(u, \mu , v)}^{(\eta, \tau, \sigma)}$} and that its eigenvalues (with related multiplicities) are the \emph{eigenvalues (with corresponding multiplicities) of $\mathcal{L}_{\mathfrak{so}(n)}(M)_{(u, \mu , v)}^{(\eta, \tau, \sigma)}$}.
It is clear that $$\mathcal{L}og_{{\mathfrak{so}(n)}}(M)= \bigsqcup \mathcal{L}_{\mathfrak{so}(n)}(M)_{(u, \mu , v)}^{(\eta, \tau, \sigma)} \ \ ,$$
where the countable disjoint union is taken on all $M$-admissible sets of multi-indices $(\eta, u, \tau, \mu, \sigma, v) $.
As in Remarks-Definitions \ref{Jordan-form} (c), \ $\mathcal{L}_{\mathfrak{so}(n)}(M)_{(u, \mu , v)}^{(\eta, \tau, \sigma)} $ is an open and closed topological subspace of $\mathcal{L}og_{{\mathfrak{so}(n)}}(M)$, \ and so we get that each connected component of $\mathcal{L}_{\mathfrak{so}(n)}(M)_{(u, \mu , v)}^{(\eta, \tau, \sigma)} $ is a connected component of $\mathcal{L}og_{{\mathfrak{so}(n)}}(M)$ too, for every $M$-admissible set of multi-indices $(\eta, u, \tau, \mu, \sigma, v)$.
\end{defis}
\begin{prop}\label{log-ortog}
Let $M \in S \mathcal{O}(n)$ whose real Jordan form, $\mathcal{\widehat{J}}_M$, is as in Notations   \ref{notaz_ortog}  $(*)$ and let $Q \in \mathcal{O}(n)$ such that $M= Q \mathcal{\widehat{J}}_M Q^T$. 
Fix any $M$-admissible set of multi-indices $(\eta, u, \tau , \mu, \sigma, v)$ and let 
$\widehat{\mathcal{J}}$ be the real Jordan form of $\mathcal{L}_{\mathfrak{so}(n)}(M)_{(u, \mu , v)}^{(\eta, \tau, \sigma)}$ as in Definitions \ref{skew_symmetric_logarithms} $(**)$. Then
we have

$
\mathcal{L}_{\mathfrak{so}(n)}(M)_{(u, \mu , v)}^{(\eta, \tau, \sigma)} = \{ Q X \widehat{\mathcal{J}}   X^T Q^T : X \in  \mathcal{C}_{\mathcal{\widehat{J}}_M} \cap \mathcal{O}(n)\} =  \Lambda_{_Q}(\mathcal{L}_{\mathfrak{so}(n)}(\mathcal{\widehat{J}}_M)_{(u, \mu , v)}^{(\eta, \tau, \sigma)}) .
$

\smallskip

Moreover $\mathcal{L}_{\mathfrak{so}(n)}(M)_{(u, \mu , v)}^{(\eta, \tau, \sigma)}$ is a compact submanifold of $\mathfrak{so}(n)$, diffeomorphic to the homogeneous space $\dfrac{\mathcal{C}_{\mathcal{\widehat{J}}_M} \cap \mathcal{O}(n)}{\mathcal{C}_{\widehat{{\mathcal{J}}}} \cap \mathcal{O}(n)}$.
\end{prop}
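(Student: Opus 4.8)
The plan is to mirror, step for step, the proof of Proposition \ref{descr-SR(M)-indici}, exploiting the fact that here the ambient group is the \emph{compact} group $\mathcal{O}(n)$, which will make the argument strictly easier. First I would reduce to the case $M=\mathcal{\widehat{J}}_M$ (i.e. $Q=I_{_n}$) by observing that conjugation by $Q$, which equals $\Lambda_{_Q}$ since $Q$ is orthogonal, is a linear automorphism of $\mathfrak{so}(n)$ and hence a diffeomorphism carrying one stratum onto the other; then I would realize the reduced stratum as an orbit of a compact Lie group and read off its structure from Remark \ref{Popov}.

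\emph{Establishing the two set equalities.} Take $V\in\mathcal{L}_{\mathfrak{so}(n)}(M)_{(u, \mu , v)}^{(\eta, \tau, \sigma)}$. By Definitions \ref{skew_symmetric_logarithms}~(b) there is $C\in\mathcal{O}(n)$ with $V=C\widehat{\mathcal{J}}C^{T}$. A direct computation from Notations \ref{notazioni}~$(\bigtriangleup)$ — using $\exp(2\pi\eta E)=E_{2\pi\eta}=I_{_2}$, $\exp\big((\theta+2\pi\tau)E\big)=E_{\theta}$ and $\exp\big((\pi+2\pi\sigma)E\big)=E_{\pi}=-I_{_2}$ for integers $\eta,\tau,\sigma$ — shows that $\exp(\widehat{\mathcal{J}})=\mathcal{\widehat{J}}_M$, exactly as $\exp(\widetilde{\mathcal{J}})=\mathcal{J}_M$ in Remarks-Definitions \ref{Jordan-form}~(d). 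Hence $\exp(V)=C\mathcal{\widehat{J}}_M C^{T}=M=Q\mathcal{\widehat{J}}_M Q^{T}$, so that $X:=Q^{T}C\in\mathcal{O}(n)$ satisfies $X\mathcal{\widehat{J}}_M X^{-1}=X\mathcal{\widehat{J}}_M X^{T}=\mathcal{\widehat{J}}_M$, i.e. $X\in\mathcal{C}_{\mathcal{\widehat{J}}_M}\cap\mathcal{O}(n)$, and $V=QX\widehat{\mathcal{J}}X^{T}Q^{T}$. Conversely, for $X\in\mathcal{C}_{\mathcal{\widehat{J}}_M}\cap\mathcal{O}(n)$ the matrix $QX\widehat{\mathcal{J}}X^{T}Q^{T}$ is skew-symmetric (because $\widehat{\mathcal{J}}$ is and $QX$ is orthogonal), exponentiates to $QX\mathcal{\widehat{J}}_M X^{T}Q^{T}=Q\mathcal{\widehat{J}}_M Q^{T}=M$ (since $X$ commutes with $\mathcal{\widehat{J}}_M$) and is conjugate to $\widehat{\mathcal{J}}$, hence has the eigenvalues with multiplicities prescribed by $(\eta, u, \tau, \mu, \sigma, v)$; so it lies in $\mathcal{L}_{\mathfrak{so}(n)}(M)_{(u, \mu , v)}^{(\eta, \tau, \sigma)}$. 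Applying this with $M$ replaced by $\mathcal{\widehat{J}}_M$ and $Q=I_{_n}$ gives $\mathcal{L}_{\mathfrak{so}(n)}(\mathcal{\widehat{J}}_M)_{(u, \mu , v)}^{(\eta, \tau, \sigma)}=\{X\widehat{\mathcal{J}}X^{T}:X\in\mathcal{C}_{\mathcal{\widehat{J}}_M}\cap\mathcal{O}(n)\}$, and since $\Lambda_{_Q}(Y)=QYQ^{T}$ on $\mathfrak{so}(n)$, the second equality of the statement follows.

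\emph{Deducing the manifold structure.} The group $G:=\mathcal{C}_{\mathcal{\widehat{J}}_M}\cap\mathcal{O}(n)$ is closed in the compact group $\mathcal{O}(n)$, hence a compact Lie group, and it acts smoothly on the vector space $\mathfrak{so}(n)$ by $X\cdot V:=XVX^{T}=XVX^{-1}$ (this preserves $\mathfrak{so}(n)$ precisely because $X$ is orthogonal). By the previous step $\mathcal{L}_{\mathfrak{so}(n)}(\mathcal{\widehat{J}}_M)_{(u, \mu , v)}^{(\eta, \tau, \sigma)}$ is the orbit of $\widehat{\mathcal{J}}$; since $G$ is compact, Remark \ref{Popov} shows this orbit is a compact \emph{embedded} submanifold of $\mathfrak{so}(n)$ diffeomorphic to $G/G_{\widehat{\mathcal{J}}}$, where $G_{\widehat{\mathcal{J}}}=\{X\in G: X\widehat{\mathcal{J}}=\widehat{\mathcal{J}}X\}=\mathcal{C}_{\mathcal{\widehat{J}}_M}\cap\mathcal{O}(n)\cap\mathcal{C}_{\widehat{\mathcal{J}}}$. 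As in Remarks-Definitions \ref{Jordan-form}~(d), $\exp(\widehat{\mathcal{J}})=\mathcal{\widehat{J}}_M$ gives $\mathcal{C}_{\widehat{\mathcal{J}}}\subseteq\mathcal{C}_{\mathcal{\widehat{J}}_M}$, whence $G_{\widehat{\mathcal{J}}}=\mathcal{C}_{\widehat{\mathcal{J}}}\cap\mathcal{O}(n)$ and the orbit is diffeomorphic to $\dfrac{\mathcal{C}_{\mathcal{\widehat{J}}_M}\cap\mathcal{O}(n)}{\mathcal{C}_{\widehat{\mathcal{J}}}\cap\mathcal{O}(n)}$. Transporting along the linear diffeomorphism $\Lambda_{_Q}$ of $\mathfrak{so}(n)$ yields the claim for $\mathcal{L}_{\mathfrak{so}(n)}(M)_{(u, \mu , v)}^{(\eta, \tau, \sigma)}$.

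I do not expect a genuine obstacle: the whole argument is the compact, skew-symmetric analogue of Proposition \ref{descr-SR(M)-indici}. The only points needing (routine) care are checking that conjugation by an orthogonal matrix sends $\mathfrak{so}(n)$ to itself — which is what legitimizes the $\mathcal{O}(n)$-action and the systematic replacement of $X^{-1}$ by $X^{T}$ — and the spectral bookkeeping identifying the $G$-orbit of $\widehat{\mathcal{J}}$ with the prescribed stratum. Worth emphasizing is that, unlike in Proposition \ref{descr-SR(M)-indici}, no separate closedness argument is required here, since compactness of $G$ makes embeddedness and compactness of the orbit automatic.
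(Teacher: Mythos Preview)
Your proof is correct and follows essentially the same approach as the paper: both establish the set equalities by writing an arbitrary skew-symmetric logarithm as $C\widehat{\mathcal{J}}C^{T}$ with $C\in\mathcal{O}(n)$, use $\exp(\widehat{\mathcal{J}})=\mathcal{\widehat{J}}_M$ to deduce $Q^{T}C\in\mathcal{C}_{\mathcal{\widehat{J}}_M}\cap\mathcal{O}(n)$, and then identify the stratum as an orbit of the compact group $\mathcal{C}_{\mathcal{\widehat{J}}_M}\cap\mathcal{O}(n)$ acting on $\mathfrak{so}(n)$ by conjugation, invoking Remark~\ref{Popov} for the embedded-submanifold and homogeneous-space structure. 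Your write-up is in fact slightly more explicit than the paper's in justifying $G_{\widehat{\mathcal{J}}}=\mathcal{C}_{\widehat{\mathcal{J}}}\cap\mathcal{O}(n)$ via the inclusion $\mathcal{C}_{\widehat{\mathcal{J}}}\subseteq\mathcal{C}_{\mathcal{\widehat{J}}_M}$ and in noting that compactness obviates the separate closedness argument needed in Proposition~\ref{descr-SR(M)-indici}.
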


\begin{proof}
Let $W \in \mathcal{L}_{\mathfrak{so}(n)}(M)_{(u, \mu , v)}^{(\eta, \tau, \sigma)}$. We know that there exists $C \in \mathcal{O}(n)$ such that
$W= C \widehat {\mathcal{J}} C^T.$
By Notations \ref{notazioni} $(\bigtriangleup)$, we get $exp(\widehat{\mathcal{J} })=\mathcal{\widehat{J}}_M .$ Hence, since $exp{(W)}=M$, we obtain $C\mathcal{\widehat{J}}_MC^T=Q\mathcal{\widehat{J}}_MQ^T$ and so $Q^TC=X$ commutes with $\mathcal{\widehat{J}}_M.$ \ So we have $C=QX$, with $X \in \mathcal{C}_{\mathcal{\widehat{J}}_M}\cap \mathcal{O}(n).$

Conversely, if $W= QX\widehat{\mathcal{J} }X^TQ^T$, with $X\in \mathcal{C}_{\mathcal{\widehat{J}}_M}\cap \mathcal{O}(n)$,  then $W$ is skew-symmetric and $exp(W)=QX\mathcal{\widehat{J}}_MX^TQ^T=Q\mathcal{\widehat{J}}_MQ^T=M.$ Hence we have 
$\mathcal{L}_{\mathfrak{so}(n)}(M)_{(u, \mu , v)}^{(\eta, \tau, \sigma)} =\\
 \{ Q X \widehat{\mathcal{J}}   X^T Q^T : X \in  \mathcal{C}_{\mathcal{\widehat{J}}_M} \cap \mathcal{O}(n)\} = \Lambda_{_Q}(\{ X \widehat{\mathcal{J}}   X^T  : X \in  \mathcal{C}_{\mathcal{\widehat{J}}_M} \cap \mathcal{O}(n)\})$. This concludes the first part, since we have $\{ X \widehat{\mathcal{J}}   X^T  : X \in  \mathcal{C}_{\mathcal{\widehat{J}}_M} \cap \mathcal{O}(n)\} = \mathcal{L}_{\mathfrak{so}(n)}(\mathcal{\widehat{J}}_M)_{(u, \mu , v)}^{(\eta, \tau, \sigma)}$.
Since $\Lambda_{_Q}$ is a diffeomorphism of $\mathfrak{so}(n)$ which maps $\mathcal{L}_{\mathfrak{so}(n)}(\mathcal{\widehat{J}}_M)_{(u, \mu , v)}^{(\eta, \tau, \sigma)}$ onto 

$\mathcal{L}_{\mathfrak{so}(n)}(M)_{(u, \mu , v)}^{(\eta, \tau, \sigma)}$ , one of these two sets is a submanifold of $\mathfrak{so}(n)$ if and only if the other is too, and in this case they are diffeomorphic.

Now consider the left action $\psi$,  of the compact Lie group $\mathcal{C}_{\mathcal{\widehat{J}}_M} \cap \mathcal{O}(n)$ on $\mathfrak{so}(n)$, defined by 
$\psi_A(X)=AXA^T$, for $A \in \mathcal{C}_{\mathcal{\widehat{J}}_M} \cap \mathcal{O}(n) $ and $X\in\mathfrak{so}(n)$;  
\ $\mathcal{L}_{\mathfrak{so}(n)}(\mathcal{\widehat{J}}_M)_{(u, \mu , v)}^{(\eta, \tau, \sigma)}$ is the orbit of $\widehat{\mathcal{J}}$, while the isotropy subgroup at $\widehat{\mathcal{J}}$ is $\mathcal{C}_{\widehat{{\mathcal{J}}}} \cap \mathcal{O}(n)$; \ so $\mathcal{L}_{\mathfrak{so}(n)}(\mathcal{\widehat{J}}_M)_{(u, \mu , v)}^{(\eta, \tau, \sigma)}$ and  $\mathcal{L}_{\mathfrak{so}(n)}(M)_{(u, \mu , v)}^{(\eta, \tau, \sigma)}$ are  compact submanifolds of $\mathfrak{so}(n)$ both diffeomorphic to $\dfrac{\mathcal{C}_{\mathcal{\widehat{J}}_M} \cap \mathcal{O}(n)}{\mathcal{C}_{\widehat{{\mathcal{J}}}} \cap \mathcal{O}(n)}$ (by Remark \ref{Popov}). This concludes the proof.
\end{proof}

\begin{thm}\label{manifold-O-con-indici}
Let $M \in S \mathcal{O}(n)$ whose real Jordan form, $\mathcal{\widehat{J}}_M$, is as in Notations \ref{notaz_ortog} $(*)$ and
fix any set $(\eta, u, \tau , \mu, \sigma, v)$ of $M$-admissible multi-indices. Then

a) $\mathcal{L}_{\mathfrak{so}(n)}(M)_{(u, \mu , v)}^{(\eta, \tau, \sigma)}$ is a compact homogeneous submanifold of $\mathfrak{so}(n)$, whose connected components are all diffeomorphic to the product
\smallskip

$\Gamma_{(g; u_{_1}, \cdots, u_{_b})} \times \bigg[ \prod\limits_{l=1}^r \Theta_{(\mu_{_{(l,1)}}, \cdots, \mu_{_{(l,d_{_l})}})} \bigg] \times \Gamma_{(0; v_{_1}, \cdots, v_{_c})}$.
\smallskip

b) The manifold $\mathcal{L}_{\mathfrak{so}(n)}(M)_{(u, \mu , v)}^{(\eta, \tau, \sigma)}$ is connected if and only if either $M$ has no real eigenvalues \ \ or\ \ $-1$ is not eigenvalue of $M$ and \ $0$ is eigenvalue of $\mathcal{L}_{\mathfrak{so}(n)}(M)_{(u, \mu , v)}^{(\eta, \tau, \sigma)}$;
\\it has two connected components if and only if either \ $1$ is eigenvalue of $M$, \ $-1$ is not eigenvalue of $M$ and \ $0$ is not eigenvalue of  $\mathcal{L}_{\mathfrak{so}(n)}(M)_{(u, \mu , v)}^{(\eta, \tau, \sigma)}$, \ \ or\ \ $-1$ is eigenvalue of $M$ and \ $0$ is eigenvalue of $\mathcal{L}_{\mathfrak{so}(n)}(M)_{(u, \mu , v)}^{(\eta, \tau, \sigma)}$, \ \ or\ \ $1$ is not eigenvalue of $M$ and \ $-1$ is eigenvalue of $M$; \\it has four connected components if and only if both \ $1$ and $-1$ are eigenvalues of $M$ and \ $0$ is not eigenvalue of $\mathcal{L}_{\mathfrak{so}(n)}(M)_{(u, \mu , v)}^{(\eta, \tau, \sigma)}$. 
\smallskip

Denote by $\mathcal{B}$ an arbitrary connected component of $\mathcal{L}_{\mathfrak{so}(n)}(M)_{(u, \mu , v)}^{(\eta, \tau, \sigma)}$. Then

c) $\mathcal{B}$ is simply connected and $\pi_2 (\mathcal{B})$ is a free abelian group whose rank is \\$b-\delta_{(g,0)}  \delta_{(b,1)} \delta_{(u_{_1},1)} + \ \delta_{(g,2)}(1-\delta_{(b,0)}) \ +\ \sum\limits_{l=1}^r d_{_l} \ -r \ +  \ c\ -\ \delta_{(c,1)}\delta_{(v_{_1},1)} $\ ;

d) assume that all non-real eigenvalues of $\mathcal{L}_{\mathfrak{so}(n)}(M)_{(u, \mu , v)}^{(\eta, \tau, \sigma)}$ are simple and that the multiplicity of \ $0$  as eigenvalue of $\mathcal{L}_{\mathfrak{so}(n)}(M)_{(u, \mu , v)}^{(\eta, \tau, \sigma)}$ is less than or equal to $2$; \ then, \ for every $\alpha \geq 3$,\ \ \ $\pi_{\alpha}(\mathcal{B})$ is isomorphic to the direct sum
\smallskip

$\pi_{\alpha}(S\mathcal{O}(h))\oplus \bigg[ \bigoplus\limits_{l=1}^r  \pi_{\alpha}(U(m_{_{l}})) \bigg] \oplus  \pi_{\alpha}(S\mathcal{O}(2k)) $.

\end{thm}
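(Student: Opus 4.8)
The plan is to derive everything from Proposition \ref{log-ortog}, which identifies $\mathcal{L}_{\mathfrak{so}(n)}(M)_{(u, \mu , v)}^{(\eta, \tau, \sigma)}$ with the compact homogeneous space $\dfrac{\mathcal{C}_{\mathcal{\widehat{J}}_M} \cap \mathcal{O}(n)}{\mathcal{C}_{\widehat{{\mathcal{J}}}} \cap \mathcal{O}(n)}$; so the first step is to compute the two isotropy groups. Since the spectra of the diagonal blocks of $\mathcal{\widehat{J}}_M$ in $(*)$ (and of those of $\widehat{\mathcal{J}}$ in $(**)$) are pairwise disjoint, Lemmas \ref{commutazione} and \ref{Ethetacommutaz} give $\mathcal{C}_{\mathcal{\widehat{J}}_M} = GL(h,\mathbb{R}) \oplus \big(\bigoplus_{l=1}^{r} GL(m_{_l},\mathbb{C})\big) \oplus GL(2k,\mathbb{R})$ and $\mathcal{C}_{\widehat{{\mathcal{J}}}} = \big(GL(g,\mathbb{R}) \oplus (\bigoplus_{i=1}^{b} GL(u_{_i},\mathbb{C}))\big) \oplus \big(\bigoplus_{l=1}^{r}\bigoplus_{t=1}^{d_{_l}} GL(\mu_{_{(l,t)}},\mathbb{C})\big) \oplus \big(\bigoplus_{j=1}^{c} GL(v_{_j},\mathbb{C})\big)$, all of whose elements are block diagonal for the displayed decomposition. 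Since a block diagonal real matrix is orthogonal if and only if each block is, and since by Remarks-Definitions \ref{rho}(a) a $GL(m,\mathbb{C})$-block meets $\mathcal{O}(2m)$ precisely in $U(m)$, we obtain $\mathcal{C}_{\mathcal{\widehat{J}}_M} \cap \mathcal{O}(n) = \mathcal{O}(h) \oplus \big(\bigoplus_{l} U(m_{_l})\big) \oplus \mathcal{O}(2k)$ and $\mathcal{C}_{\widehat{{\mathcal{J}}}} \cap \mathcal{O}(n) = \big(\mathcal{O}(g) \oplus (\bigoplus_{i} U(u_{_i}))\big) \oplus \big(\bigoplus_{l,t} U(\mu_{_{(l,t)}})\big) \oplus \big(\bigoplus_{j} U(v_{_j})\big)$.

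Because the smaller group sits block-wise inside the larger one, the quotient splits as the direct product of $\dfrac{\mathcal{O}(h)}{\mathcal{O}(g) \oplus (\bigoplus_i U(u_{_i}))}$, \ $\prod_{l=1}^r \dfrac{U(m_{_l})}{\bigoplus_t U(\mu_{_{(l,t)}})}$ \ and \ $\dfrac{\mathcal{O}(2k)}{\bigoplus_j U(v_{_j})}$. Using $h = g + 2\sum_i u_{_i}$ and $2k = 2\sum_j v_{_j}$, Remarks-Definitions \ref{def-spaz-omog}(b),(d) identify each connected component of the first and third factors with $\Gamma_{(g; u_{_1},\cdots,u_{_b})}$ and $\Gamma_{(0; v_{_1},\cdots,v_{_c})}$ respectively, while the middle factors are the connected spaces $\Theta_{(\mu_{_{(l,1)}},\cdots,\mu_{_{(l,d_{_l})}})}$; this proves (a). For (b) I would count the components factor by factor: each $U(m_{_l})$ is connected; the quotient $\mathcal{O}(h)/(\mathcal{O}(g) \oplus \bigoplus_i U(u_{_i}))$ is connected if and only if its isotropy subgroup meets the $\det = -1$ component of $\mathcal{O}(h)$, which (as each $U(u_{_i})$ lies in $S\mathcal{O}(2u_{_i})$) happens exactly when $g \geq 1$, so it has one component when $h = 0$ or $g \geq 1$ and two when $h \geq 1$ and $g = 0$; likewise $\mathcal{O}(2k)/\bigoplus_j U(v_{_j})$ has two components when $k \geq 1$ and one when $k = 0$. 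Since $h \geq 1 \Leftrightarrow 1 \in \sigma(M)$, $\ k \geq 1 \Leftrightarrow -1 \in \sigma(M)$ and $g \geq 1 \Leftrightarrow 0$ is an eigenvalue of $\mathcal{L}_{\mathfrak{so}(n)}(M)_{(u, \mu , v)}^{(\eta, \tau, \sigma)}$, multiplying the two factor-counts reproduces term by term the trichotomy in (b).

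For (c) and (d) I would pass to the homotopy groups of an arbitrary component $\mathcal{B}$, which by (a) is diffeomorphic to $\Gamma_{(g; u_{_1},\cdots,u_{_b})} \times \big(\prod_{l=1}^r \Theta_{(\mu_{_{(l,1)}},\cdots,\mu_{_{(l,d_{_l})}})}\big) \times \Gamma_{(0; v_{_1},\cdots,v_{_c})}$. A finite product of simply connected spaces is simply connected, and $\pi_2$ of a product is the direct sum of the $\pi_2$'s; hence $\pi_2(\mathcal{B})$ is free abelian of rank equal to the sum of the three ranks furnished by Remark \ref{Pi2-con delta} (with $\zeta = g$, $s = b$, $\nu_{_1} = u_{_1}$ for the first $\Gamma$-factor, with $\zeta = 0$, $s = c$, $\nu_{_1} = v_{_1}$ for the third, and with $s = d_{_l}$ for the $l$-th $\Theta$-factor), which simplifies to the expression in (c). For (d), the hypothesis that the non-real eigenvalues of $\mathcal{L}_{\mathfrak{so}(n)}(M)_{(u, \mu , v)}^{(\eta, \tau, \sigma)}$ are simple forces $u_{_i} = \mu_{_{(l,t)}} = v_{_j} = 1$, whence $h = g + 2b$, $m_{_l} = d_{_l}$ and $k = c$; with $g \leq 2$ one may then apply Proposition \ref{Gamma-omotopia}(c) to the two $\Gamma$-factors and Proposition \ref{Theta-omotopia}(c) to the $\Theta$-factors, getting $\pi_\alpha(\mathcal{B}) \cong \pi_\alpha(S\mathcal{O}(h)) \oplus \big(\bigoplus_l \pi_\alpha(U(m_{_l}))\big) \oplus \pi_\alpha(S\mathcal{O}(2k))$ for $\alpha \geq 3$, after separately handling the degenerate configurations $g=0,\, b \leq 1$ (first $\Gamma$-factor) and $c \leq 1$ (third $\Gamma$-factor), in which the factor is a single point and the matching group (an $S\mathcal{O}(j)$ with $j \leq 2$) also vanishes for $\alpha \geq 3$.

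I expect the main obstacle to lie in part (b): one must correctly identify the connected components of $\mathcal{O}(h)$ and $\mathcal{O}(2k)$ through the determinant, determine which of them each isotropy subgroup meets, and then match the product of the two resulting counts — over all combinations of whether $\pm 1$ are eigenvalues of $M$ and whether $0$ is an eigenvalue of the logarithm — with the precise three-way classification in the statement. By contrast, once the product decomposition in (a) is in hand, parts (c) and (d) are routine applications of Remark \ref{Pi2-con delta} and Propositions \ref{Gamma-omotopia} and \ref{Theta-omotopia}, apart from the few degenerate index configurations noted above.
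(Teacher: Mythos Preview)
Your proposal is correct and follows essentially the same route as the paper: compute the two centralizers via Lemmas \ref{commutazione} and \ref{Ethetacommutaz}, intersect with $\mathcal{O}(n)$ block by block, factor the quotient from Proposition \ref{log-ortog} as a product, and then read off (a)--(d) from Remarks-Definitions \ref{def-spaz-omog}(d), Remark \ref{Pi2-con delta}, and Propositions \ref{Gamma-omotopia}(c), \ref{Theta-omotopia}(c). Your treatment of (b), arguing via whether the block isotropy meets the $\det=-1$ component of $\mathcal{O}(h)$ and $\mathcal{O}(2k)$, is exactly what the paper summarizes as ``simple considerations'' from Remarks-Definitions \ref{def-spaz-omog}(d), and your explicit handling of the degenerate cases in (d) matches the paper's implicit use of $\pi_\alpha(S\mathcal{O}(j))=0$ for $j\le 2$ and $\alpha\ge 3$.
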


\begin{proof}
From Lemmas \ref{commutazione} and \ref{Ethetacommutaz}, we get 

$\mathcal{C}_{\mathcal{\widehat{J}}_M} \cap \mathcal{O}(n) =\lbrace GL(h,\mathbb{R}) \oplus \big(\bigoplus_{l=1}^rGL(m_{_l},\mathbb{C})\big)  \oplus GL(2k,\mathbb{R}) \rbrace \cap \mathcal{O}(n) = \\ 
\mathcal{O}(h) \oplus \big(\bigoplus_{l=1}^r U(m_{_l}) \big) \oplus \mathcal{O}(2k)\ ,
$ 

$\mathcal{C}_{\widehat{\mathcal{J}}} \cap \mathcal{O}(n)=$

$\big\lbrace GL(g,\mathbb{R}) \oplus \big(\bigoplus\limits_{i=1}^b GL(u_{_i}, \mathbb{C}\big) \oplus \big( \bigoplus\limits_{l=1}^r \ \bigoplus\limits_{t=1}^{d_{_l}}G(\mu_{_{(l,t)}},\mathbb{C})\big)  \oplus \big(\bigoplus\limits_{j=1}^c GL(v_{_j},\mathbb{C})\big) \big\rbrace \cap \mathcal{O}(n)  =
\mathcal{O}(g) \oplus \big(\bigoplus\limits_{i=1}^b U(u_{_i})\big) \oplus \big(\bigoplus\limits_{l=1}^r \ \bigoplus\limits_{t=1}^{d_{_l}}U(\mu_{_{(l,t)}})\big) \oplus (\bigoplus\limits_{j=1}^c U(v_{_j}))$.
\smallskip

It is easy to prove that the quotient $\dfrac{\mathcal{C}_{\mathcal{\widehat{J}}_M} \cap \mathcal{O}(n)}{\mathcal{C}_{\widehat{\mathcal{J}}} \cap \mathcal{O}(n)}$ is naturally diffeomorphic to 
to the product 
$$\bigg[ \ \dfrac{\mathcal{O}(h)}{\mathcal{O}(g) \oplus \big(\bigoplus\limits_{i=1}^b U(u_{_i})\big)} \ \bigg] \times \bigg[ \ \prod\limits_{l=1}^r \dfrac{U(m_{_l})}{(\bigoplus\limits_{t=1}^{d_{_l}}U(\mu_{_{(l,t)}})\big)} \ \bigg] \times \bigg[ \ \dfrac{\mathcal{O}(2k)}{\big(\bigoplus\limits_{j=1}^c U(v_{_j})\big)} \ \bigg]  \ \ \ \ .$$
(In this formula we have assumed, without losing generality, that $h, k \geq 1$.) Then we get (a), taking into account Proposition \ref{log-ortog} and Remarks-Definitions \ref{def-spaz-omog} (d).

The statement (b) still follows, by means of simple considerations, from Remarks-Definitions \ref{def-spaz-omog} (d).

Part (c) easily follows from Remark \ref{Pi2-con delta}.

The statement in (d) follows from Propositions \ref{Gamma-omotopia} (c) and \ref{Theta-omotopia} (c), taking into acount that, in this case, we have $g \leq 2, \ \ g+2b=h,\ \ c=k$ \ and \ $m_{_l}=d_{_l}$, \ for every \ $l=1, \cdots, r \ .$
\end{proof}

\begin{rems}\label{log-numer}
a) Remembering Notations \ref{notaz_ortog} and Definitions \ref{skew_symmetric_logarithms} (a), we remark that, if the set of multi-indices 
$(\eta, u, \tau , \mu, \sigma, v)$ is $M$-admissible and the order $n$ of $M$ is odd, then $0$ is necessarily an eigenvalue of $\mathcal{L}_{\mathfrak{so}(n)}(M)_{(u, \mu , v)}^{(\eta, \tau, \sigma)}$ 
.

b) For $n \geq 2$, the set of all skew-symmetric real logarithms of the $n \times n$ special orthogonal matrix $M$ is never finite and it is countably infinite if and only if, for every 
$M$-admissible set of multi-indices 
$(\eta, u, \tau , \mu, \sigma, v)$, the manifold $\mathcal{L}_{\mathfrak{so}(n)}(M)_{(u, \mu , v)}^{(\eta, \tau, \sigma)}$ has zero dimension.
From Theorem \ref{manifold-O-con-indici} and Remarks-Definitions \ref{def-spaz-omog} (b),(c), we easily obtain that, if $M \in S\mathcal{O}(n)$, then
the set $\mathcal{L}og_{{\mathfrak{so}(n)}}(M) $ is countably infinite if and only if all non-real eigenvalues of $M$ are simple and the multiplicity of $1$ and $-1$ as (possible) eigenvalues of $M$ is less than or equal to $2$.
\end{rems}

\begin{defi}\label{prin-antis-log}
Let $M \in S\mathcal{O}(n)$. As in Definition \ref{prin-gen}, we say that a matrix $X \in \mathfrak{so}(n)$ is a \emph{principal skew-symmetric real logarithm} of $M$, if $exp(X)=M$ and each eigenvalue of $X$ has imaginary part in $[-\pi, \pi]$.
We denote by $\mathcal{PL}og_{{\mathfrak{so}(n)}}(M)$ the set of all principal skew-symmetric real logarithms of $M$.
\end{defi}

\begin{thm}\label{antis-log-princ} 
Let $M \in S\mathcal{O}(n)$. If \ $-1$ is not eigenvalue of $M$, then the set \\$\mathcal{PL}og_{{\mathfrak{so}(n)}}(M)$ consists of a single point, while, if \ $-1$ is an eigenvalue of $M$ of multiplicity $2k \geq 2$, then $\mathcal{PL}og_{{\mathfrak{so}(n)}}(M)$ is a compact submanifold of ${\mathfrak{so}(n)}$, diffeomorphic to the homogeneous space $\dfrac{\mathcal{O}(2k)}{ U(k)}$. In this last case, $\mathcal{PL}og_{{\mathfrak{so}(n)}}(M)$ has two  connected components, each of which is diffeomorphic to the symmetric space $\Gamma_{(0 ; k)}$, and hence, called \ $\mathcal{C}$\  any of its two connected components, we have that \\
a) \ $\mathcal{C}$ is a single point, if $k=1$;\\
b) \ $\mathcal{C}$ is diffeomorphic to a $2$-dimensional sphere, if $k=2$;\\
c) \ $\mathcal{C}$ is simply connected and $\pi_2(\mathcal{C})$ is an infinite cyclic group, if $k \geq 3$.
\end{thm}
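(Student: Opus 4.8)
The plan is to identify $\mathcal{PL}og_{\mathfrak{so}(n)}(M)$ with a single piece $\mathcal{L}_{\mathfrak{so}(n)}(M)_{(u,\mu,v)}^{(\eta,\tau,\sigma)}$ of Definitions~\ref{skew_symmetric_logarithms}, and then to invoke Proposition~\ref{log-ortog} together with the homogeneous-space facts of Sections~\ref{prelim} and~\ref{Top}. First I would determine, for $W\in\mathcal{L}og_{\mathfrak{so}(n)}(M)$ described as in Definitions~\ref{skew_symmetric_logarithms}(a), which $M$-admissible multi-indices make every eigenvalue of $W$ have imaginary part in $[-\pi,\pi]$. For the eigenvalues $\pm2\pi\eta_i{\bf i}$ sitting over the eigenvalue $1$ of $M$ this forces $\eta_i=0$, hence $b=0$ and $g=h$; for the eigenvalues $\pm(\theta_l+2\pi\tau_{(l,t)}){\bf i}$ sitting over $\exp(\pm{\bf i}\theta_l)$, the bound $0<\theta_l<\pi$ gives $\theta_l-2\pi<-\pi$, so this forces $\tau_{(l,t)}=0$, hence $d_l=1$ and $\mu_{(l,1)}=m_l$; for the eigenvalues $\pm(\pi+2\pi\sigma_j){\bf i}$ sitting over $-1$ it forces $\pi+2\pi\sigma_j\in\{\pi,-\pi\}$, and, since $W$ is real, ${\bf i}\pi$ and $-{\bf i}\pi$ must occur with equal multiplicity, so the eigenvalue datum of $W$ over the $(-1)$-eigenspace of $M$ is uniquely determined: $\pm{\bf i}\pi$, each with multiplicity $k$. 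Thus $\mathcal{PL}og_{\mathfrak{so}(n)}(M)=\mathcal{L}_{\mathfrak{so}(n)}(M)_{(u,\mu,v)}^{(\eta,\tau,\sigma)}$ for $\eta,\tau,\sigma$ all zero, $u=(h)$, $\mu=(m_1;\cdots;m_r)$, $v=(k)$; this datum is $M$-admissible, and since $-1$ has even multiplicity in $S\mathcal{O}(n)$ by Notations~\ref{notaz_ortog}, the set is non-empty.

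Next I would apply Proposition~\ref{log-ortog}: $\mathcal{PL}og_{\mathfrak{so}(n)}(M)$ is a compact submanifold of $\mathfrak{so}(n)$ diffeomorphic to $\big(\mathcal{C}_{\widehat{\mathcal{J}}_M}\cap\mathcal{O}(n)\big)\big/\big(\mathcal{C}_{\widehat{\mathcal{J}}}\cap\mathcal{O}(n)\big)$. Computing the two centralisers exactly as in the proof of Theorem~\ref{manifold-O-con-indici} (Lemmas~\ref{commutazione} and~\ref{Ethetacommutaz}, plus the inclusion $U(k)\subset S\mathcal{O}(2k)$ of Remarks-Definitions~\ref{rho}), one gets $\mathcal{C}_{\widehat{\mathcal{J}}_M}\cap\mathcal{O}(n)=\mathcal{O}(h)\oplus\big(\bigoplus_{l=1}^{r}U(m_l)\big)\oplus\mathcal{O}(2k)$ and $\mathcal{C}_{\widehat{\mathcal{J}}}\cap\mathcal{O}(n)=\mathcal{O}(h)\oplus\big(\bigoplus_{l=1}^{r}U(m_l)\big)\oplus U(k)$; the first two summands cancel in the quotient, so $\mathcal{PL}og_{\mathfrak{so}(n)}(M)\cong\mathcal{O}(2k)/U(k)$. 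If $-1$ is not an eigenvalue of $M$, i.e.\ $k=0$, the two centralisers coincide and the quotient is a single point. If $-1$ has multiplicity $2k\geq2$, then $k\geq1$ and the case $\zeta=0$, $s=1$, $\nu_1=k$ of Remarks-Definitions~\ref{def-spaz-omog}(d) shows that $\mathcal{O}(2k)/U(k)$ has two connected components, each diffeomorphic to $\Gamma_{(0;k)}$.

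It then remains to read off the three sub-cases: $\Gamma_{(0;1)}$ is a single point by Remarks-Definitions~\ref{def-spaz-omog}(b), giving (a); $\Gamma_{(0;2)}$ is diffeomorphic to the $2$-dimensional sphere by Remarks-Definitions~\ref{def-spaz-omog}(c), giving (b); and $\Gamma_{(0;k)}$ is simply connected with $\pi_2(\Gamma_{(0;k)})\cong\mathbb{Z}$ for every $k\geq2$ by Proposition~\ref{Bott}, giving (c). I expect the only genuinely delicate point to be the first step: one must verify that the closed endpoints $\pm\pi$ are forced to be attained in the $(-1)$-block, that reality of $W$ makes ${\bf i}\pi$ and $-{\bf i}\pi$ appear with the \emph{same} multiplicity $k$ (so there is no unbalanced split and no extra pieces), and that the formally different encodings $\sigma_1=0$ and $\sigma_1=-1$ of this $(-1)$-block name one and the same subset of $\mathfrak{so}(n)$. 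Once $\mathcal{PL}og_{\mathfrak{so}(n)}(M)$ has been pinned down as a single $\mathcal{L}_{\mathfrak{so}(n)}(M)_{(u,\mu,v)}^{(\eta,\tau,\sigma)}$, the remaining steps are routine applications of the results already proved.
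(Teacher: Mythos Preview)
Your proposal is correct and follows essentially the same route as the paper: identify $\mathcal{PL}og_{\mathfrak{so}(n)}(M)$ with the single piece $\mathcal{L}_{\mathfrak{so}(n)}(M)_{(u,\mu,v)}^{(\eta,\tau,\sigma)}$ for $\eta=\tau=\sigma=\mathsf{O}$, $u=(h)$, $\mu=(m_1;\cdots;m_r)$, $v=(k)$, then read off the structure from Proposition~\ref{log-ortog} (equivalently Theorem~\ref{manifold-O-con-indici} and its proof) together with Remarks-Definitions~\ref{def-spaz-omog}(b),(c),(d) and Proposition~\ref{Bott}. Your extra care about the $\sigma_1=0$ versus $\sigma_1=-1$ encoding is well placed but harmless, since both label the same eigenvalue set $\{\pm\pi{\bf i}\}$ and hence the same subset of $\mathfrak{so}(n)$.
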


\begin{proof}
Since the set \ $\mathcal{PL}og_{{\mathfrak{so}(n)}}(M)$ agrees with the manifold $\mathcal{L}_{\mathfrak{so}(n)}(M)_{(u, \mu , v)}^{(\eta, \tau, \sigma)}$, where
$\eta = \mathsf{O}$, $\tau = \mathsf{O}$, $\sigma= \mathsf{O}$,  $\mu=(m_{_1};\ \cdots\ ; m_{_r})$, $u=(h)$, $v=(k)$, the statement follows from Theorem \ref{manifold-O-con-indici} (and from its proof), taking into account Remarks-Definitions \ref{def-spaz-omog} (b),(c),(d) and Proposition \ref{Bott}.
\end{proof}

\end{document}